\setlist[enumerate]{itemsep=-1ex}
\setlist[itemize]{itemsep=-1ex}
\newcommand{\Up}{\Upsilon}
\newcommand{\excise}[1]{}
\definecolor{light}{gray}{.85}
\definecolor{kindoflight}{gray}{.75}
\renewcommand{\AA}{\mathcal{A}}
\newcommand{\Cc}{\mathbb{C}}
\newcommand{\Rr}{\mathbb{R}}
\newcommand{\join}{\vee}
\newtheorem{thm}{Theorem}[section]
\newtheorem{prop}[thm]{Proposition}
\newtheorem{cor}[thm]{Corollary}
\newtheorem{lem}[thm]{Lemma}
\newtheorem{conj}[thm]{Conjecture}
\newtheorem{question}[thm]{Question}
\theoremstyle{definition}
\newtheorem{exa}[thm]{Example}
\newtheorem{defn}[thm]{Definition}
\theoremstyle{remark}
\DeclareMathOperator{\brk}{br}
\DeclareMathOperator{\link}{link}
\DeclareMathOperator{\cISF}{\cI\cS\cF}
\DeclareMathOperator{\isf}{isf}
\DeclareMathOperator{\ISF}{ISF}
\DeclareMathOperator{\cCF}{\cC\cF}
\DeclareMathOperator{\cf}{cf}
\DeclareMathOperator{\CF}{CF}
\DeclareMathOperator{\cTF}{\cT\cF}
\DeclareMathOperator{\tf}{tf}
\DeclareMathOperator{\TF}{TF}
\DeclareMathOperator{\nbc}{nbc}
\DeclareMathOperator{\ao}{ao}
\newcommand{\ben}{\begin{enumerate}}
\newcommand{\een}{\end{enumerate}}
\newcommand{\ble}{\begin{lem}}
\newcommand{\ele}{\end{lem}}
\newcommand{\bth}{\begin{thm}}
\renewcommand{\eth}{\end{thm}}
\newcommand{\bpr}{\begin{prop}}
\newcommand{\epr}{\end{prop}}
\newcommand{\bco}{\begin{cor}}
\newcommand{\eco}{\end{cor}}
\newcommand{\bcon}{\begin{conj}}
\newcommand{\econ}{\end{conj}}
\newcommand{\bde}{\begin{defn}}
\newcommand{\ede}{\end{defn}}
\newcommand{\bex}{\begin{exa}}
\newcommand{\eex}{\end{exa}}
\newcommand{\barr}{\begin{array}}
\newcommand{\earr}{\end{array}}
\newcommand{\btab}{\begin{tabular}}
\newcommand{\etab}{\end{tabular}}
\newcommand{\beq}{\begin{equation}}
\newcommand{\eeq}{\end{equation}}
\newcommand{\bea}{\begin{eqnarray*}}
\newcommand{\eea}{\end{eqnarray*}}
\newcommand{\bal}{\begin{align*}}
\newcommand{\bce}{\begin{center}}
\newcommand{\ece}{\end{center}}
\newcommand{\bpi}{\begin{picture}}
\newcommand{\epi}{\end{picture}}
\newcommand{\bpp}{\begin{picture}}
\newcommand{\epp}{\end{picture}}
\newcommand{\bfi}{\begin{figure} \begin{center}}
\newcommand{\efi}{\end{center} \end{figure}}
\newcommand{\bprf}{\begin{proof}}
\newcommand{\eprf}{\end{proof}\medskip}
\newcommand{\capt}{\caption}
\newcommand{\bsl}{\begin{slide}{}}
\newcommand{\esl}{\end{slide}}
\newcommand{\bfr}{\begin{frame}}
\newcommand{\efr}{\end{frame}}
\newcommand{\hqed}{\hfill \qed}
\newcommand{\hso}[1]{\hspace{-1pt}}
\newcommand{\vs}[1]{\vspace{#1}}
\newcommand{\emp}{\emptyset}
\newcommand{\sm}{\hspace{-2pt}\setminus\hspace{-2pt}}
\newcommand{\sbe}{\subseteq}
\def\<{\langle}
\def\>{\rangle}
\newcommand{\ree}[1]{(\ref{#1})}
\newcommand{\si}{\sigma}
\newcommand{\De}{\Delta}
\newcommand{\0}{{\bf 0}}
\newcommand{\bx}{{\bf x}}
\newcommand{\bbZ}{{\mathbb Z}}
\newcommand{\cC}{{\cal C}}
\newcommand{\cF}{{\cal F}}
\newcommand{\cI}{{\cal I}}
\newcommand{\cNBC}{{\cal NBC}}
\newcommand{\cS}{{\cal S}}
\newcommand{\cT}{{\cal T}}
\DeclareMathOperator{\rk}{rk}
\begin{document}

\pagestyle{plain}

\title{Increasing spanning forests in graphs and simplicial complexes}

\author{%
Joshua  Hallam%
\thanks{Department of Mathematics and Statistics, Wake Forest University, Winston-Salem, NC 27109, USA.
E-mail: {\tt jwhalla@gmail.com}}
\and
Jeremy L.\ Martin%
\thanks{Department of Mathematics, University of Kansas, Lawrence, KS 66045-7594, USA.
E-mail: {\tt jlmartin@ku.edu}}
\and
Bruce E.\ Sagan%
\thanks{Department of Mathematics, Michigan State University, East Lansing, MI 48824-1027, USA.
E-mail: {\tt sagan@math.msu.edu}}
}

\date{\today\\[10pt]
	\begin{flushleft}
	\small Key Words: chromatic polynomial, graph, increasing forest, perfect elimination order, simplicial complex
	                                       \\[5pt]
	\small AMS subject classification (2010):  05C30  (Primary) 05C15, 05C31,  05E45 (Secondary)
	\end{flushleft}}

\maketitle

\begin{abstract}

Let $G$ be a graph with vertex set $\{1,\dots,n\}$.  A spanning forest $F$ of $G$ is {\em increasing} if the sequence of labels on any path starting at the minimum vertex of a tree of $F$ forms an increasing sequence.  Hallam and Sagan showed that the  generating function $\ISF(G,t)$ for increasing spanning forests of $G$ has all nonpositive integral roots.  Furthermore they proved that, up to a change of sign, this polynomial equals the chromatic polynomial of $G$ precisely when $1,\dots,n$  is a perfect elimination order for $G$.  We give new, purely combinatorial proofs of these results which permit us to generalize them in several ways. For example, we are able to bound the coefficients of $\ISF(G,t)$ using broken circuits.  We are also able to extend these results to simplicial complexes using the new notion of a cage-free complex.  A generalization to labeled multigraphs is also given.  We observe that the definition of an increasing spanning forest can be formulated in terms of pattern
avoidance, and we end by exploring spanning forests that avoid the patterns $231$, $312$ and $321$.

\end{abstract}

%
%

\section{Introduction}

The purpose of this paper is to prove generalizations and consequences of two theorems of Hallam and Sagan about  increasing spanning forests~\cite{hs:fcp}.  To state them, we first need some definitions.

Let $G=(V,E)$ be a finite graph with vertex set $V=V(G)$ and edge set $E=E(G)$.  We will always assume that $V$ is a subset of the positive integers so that there is a total order on the vertices.  If the graph is a tree $T$ then we consider it to be rooted at its smallest vertex $r$.

\begin{defn} \label{defn:isf}
A labeled tree is \emph{increasing} if the integers on any path beginning at the root form an increasing sequence.
A labeled forest is \emph{increasing} if each component is an increasing tree.
\end{defn}

For example, the tree on the left in Figure~\ref{T} is increasing, while the one on the right is not because of the path $2,7,4$.

\begin{figure}[th]
\bce
\begin{tikzpicture}
\coordinate (v2) at (3,2);
\coordinate (v3) at (1.5,1);
\coordinate (v6) at (3,1);
\coordinate (v4) at (4.5,1);
\coordinate (v5) at (1,0);
\coordinate (v9) at (2,0);
\coordinate (v8) at (4,0);
\coordinate (v7) at (5,0);
\foreach \v in {(v2),(v3),(v4),(v5),(v6),(v7),(v8),(v9)} \fill \v circle(.1);
\draw(1,-.4) node{$5$};
\draw(2,-.4) node{$9$};
\draw(4,-.4) node{$8$};
\draw(5,-.4) node{$7$};
\draw(1.5,1.4) node{$3$};
\draw(3,.6) node{$6$};
\draw(4.5,1.4) node{$4$};
\draw(3,2.4) node{$2$};
\draw (v5) -- (v3) -- (v2) -- (v4) -- (v7) (v9) -- (v3) (v6) -- (v2) (v8) -- (v4);
\node at (3,-1) {$T$};
\begin{scope}[shift={(8,0)}]
\coordinate (v2) at (3,2);
\coordinate (v3) at (1.5,1);
\coordinate (v6) at (3,1);
\coordinate (v7) at (4.5,1);
\coordinate (v5) at (1,0);
\coordinate (v9) at (2,0);
\coordinate (v8) at (4,0);
\coordinate (v4) at (5,0);
\foreach \v in {(v2),(v3),(v4),(v5),(v6),(v7),(v8),(v9)} \fill \v circle(.1);
\draw(1,-.4) node{$5$};
\draw(2,-.4) node{$9$};
\draw(4,-.4) node{$8$};
\draw(5,-.4) node{$4$};
\draw(1.5,1.4) node{$3$};
\draw(3,.6) node{$6$};
\draw(4.5,1.4) node{$7$};
\draw(3,2.4) node{$2$};
\draw (v5) -- (v3) -- (v2) -- (v6) (v7) -- (v8) (v9) -- (v3);
\draw[thick,dashed] (v2) -- (v7) -- (v4);
\node at (3,-1) {$T'$};
\end{scope}
\end{tikzpicture}
\ece
\capt{An increasing tree $T$ and a non-increasing tree $T'$. \label{T}}
\end{figure}

As usual, call a subgraph $H$ of $G$ {\em spanning} if $V(H)=V(G)$.  Since a spanning subgraph is determined by its edge set, it is convenient to ignore the distinction between subsets of $E(G)$ and spanning subgraphs.
We will be interested in \emph{increasing spanning forests} of $G$, or ISFs for short.  Define
\begin{equation} \label{ISF-notation}
\begin{aligned}
\cISF(G) &=\text{ set of ISFs of $G$}, & \isf(G) &= |\cISF(G)|,\\
\cISF_m(G)&=\text{set of ISFs of $G$ with $m$ edges}, & \isf_m(G) &= |\cISF_m(G)|,\\
\ISF(G,t) &= \sum_{m\ge0} \isf_m(G) t^{|V(G)|-m}
\end{aligned}
\end{equation}
where the absolute value signs denote cardinality.
These invariants depend on the labeling of the vertices of $G$, although the notation does not specify the labeling explicitly.
By way of illustration, consider the two labelled graphs in Figure~\ref{G}.  An easy computation shows that
\[\begin{array}{lll}
\ISF(G,t) = t^4+4t^3+5t^2+2t &=& t(t+1)^2(t+2),\\
\ISF(H,t) = t^4+4t^3+3t^2 &=& t^2(t+1)(t+3).
\end{array}\]
Even though these polynomials are different, it is striking that they both factor with nonpositive integral roots.  

To explain the two previous factorizations, it will be convenient to assume from now on that for all graphs $G$ we have
$V(G)=\{1,2,\dots,n\}:=[n]$ unless otherwise noted.  We will also adopt the convention that all edges will be listed with their smallest vertex first.  For $k\in[n]$ we define
\beq
\label{E_k}
E_k= E_k(G) =\{ e\in E(G)\ :\ \text{$e = jk$ for some $ j<k$}\}.
\eeq
Returning to the left-hand graph in Figure~\ref{G}, we have
$$
E_1=\emp,\  E_2=\{12\},\ E_3=\{23\},\ E_4=\{14,24\}.
$$
Note that 
$$
\prod_{k=1}^4 (t+|E_k|) = t(t+1)^2(t+2) = \ISF(G,t).
$$
It turns out that this is always the case.
\bth[{\cite[Theorem~25]{hs:fcp}}]
\label{HS1}
Let $G$ be a graph with $V=[n]$ and $E_k$ as in~\ree{E_k}.  Then
\[\ISF(G,t) = \prod_{k=1}^n (t+|E_k|).\]
\eth

\bfi
\begin{tikzpicture}
\foreach \v in {(0,0),(2,0),(0,2),(2,2)} \fill \v circle(.1);
\draw(2,0)--(2,2)--(0,2)--(0,0)--(2,2);
\draw(-.5,0) node{$4$};
\draw(2.5,0) node{$3$};
\draw(-.5,2) node{$1$};
\draw(2.5,2) node{$2$};
\draw(1,-1) node{\Large $G$};
\begin{scope}[shift={(8,0)}]
\foreach \v in {(0,0),(2,0),(0,2),(2,2)} \fill \v circle(.1);
\draw(2,0)--(2,2)--(0,2)--(0,0)--(2,2);
\draw(-.5,0) node{$2$};
\draw(2.5,0) node{$3$};
\draw(-.5,2) node{$1$};
\draw(2.5,2) node{$4$};
\draw(1,-1) node{\Large $H$};
\end{scope}
\end{tikzpicture}
\capt{Two graphs $G$ and $H$  \label{G}}
\efi

To state the second theorem we will be studying, we recall some notions from the theory of graph coloring.   A {\em proper coloring} of a graph $G$ using a set $S$ is an assignment of elements of $S$ to the vertices of $G$ so that no edge has both endpoints the same color.  Suppose $t$ is a positive integer.  The {\em chromatic polynomial} of $G$  is
$$
P(G,t)=\text{the number of proper colorings of $V$ using the set $[t]$.}
$$
It is well known that $P(G,t)$ is a polynomial function of~$t$ for every graph~$G$.
Returning again to the graph~$G$ in Figure~\ref{G}, if we color the vertices in the order $1,2,3,4$, then the number of colors available at each step depends only on the number of adjacent, previously-colored vertices.  So we obtain
$$
P(G,t)=t(t-1)(t-1)(t-2)=(-1)^4 \ISF(G,-t).
$$ 

It cannot be the case that $P(G,t)$ and $\ISF(G,t)$ are always the same up to a sign change: the chromatic polynomial does not always have integral roots, and is independent of the choice of  labeling.  However, there is a well-known condition which implies equality.

\begin{defn}\label{def:PEO}
A \emph{perfect elimination ordering (PEO)} on $G$ is a total ordering  $v_1,v_2,\dots,v_n$ of $V(G)$ such that, for every $k$, the set $N(v_k,G_k):=N(v_k)\cap\{v_1,\dots,v_{k-1}\}$ is a clique, where $N(v_k)$ denotes the set of neighbors of $v_k$.

Equivalently, for all $i<j<k$, if $v_iv_k$ and $v_jv_k\in E(G)$, then $v_iv_j\in E(G)$.
\end{defn}
It is well known that the existence of a PEO is equivalent to the condition that $G$ is \emph{chordal}, i.e., every cycle of length~4 or greater has a chord (an edge between two non-consecutive vertices of the cycle).  

If $G$ has a PEO, then counting exactly as we did for our example graph gives
$$
P(G,t)=\prod_{k=1}^n (t-|N(v_k,G_k)|).
$$
It is also easy to verify that the order $1,2,3,4$ is a PEO for the graph $G$ in Figure~\ref{G}, while that same order is not a PEO for the graph $H$.  Again, this presages a general result.
\bth[\cite{hs:fcp}]
\label{HS2}
Let $G$ be a graph with $V=[n]$.  We have
$$
\ISF(G,t)=(-1)^n P(G,-t)
$$
if and only if the ordering $1,2,\dots,n$ is a PEO for $G$.
\eth

This paper is devoted to expanding the ideas of~\cite{hs:fcp} to broader settings, including replacing graphs with simplicial complexes or labeled multigraphs, or replacing ISFs with labeled forests obeying more general pattern-avoidance conditions.

Section~\ref{otr} contains new, combinatorial proofs of strengthened versions of Theorems~\ref{HS1} and~\ref{HS2}.  The previous proofs used the machinery of poset quotients developed in~\cite{hs:fcp}.
 First, we give an alternative characterization of increasing forests  in Lemma~\ref{ik,jk}, that implies a weighted factorization formula, Theorem~\ref{ISF-weighted-factorization}, which generalizes Theorem~\ref{HS1}.  Second, we show in Theorem~\ref{isf-and-nbc} that there is a bijection between increasing spanning forests in $G$ and NBC~sets (edge sets with no broken circuit) precisely when the natural ordering on $G$ is a PEO.  Theorem~\ref{HS2} follows from this result together with Whitney's classical interpretation of the chromatic polynomial as a generating function for NBC~sets \cite{whi:lem}.

In Section~\ref{sc}, we extend our results from graphs to simplicial complexes.  The new characterization of increasing spanning forests in Lemma~\ref{ik,jk} naturally generalizes to the idea of a \emph{cage-free} subcomplex of a simplicial complex in Definition~\ref{caged}, and so we study the generating function
\[
\CF(\De,t,\bx)=\sum_\Upsilon \prod_{\phi\in\Upsilon_d} x_\phi t^{N-|\Up_d|}
\]
for cage-free subcomplexes $\Upsilon$ of a pure 
simplicial complex $\Delta$ of dimension $d$, where $\Upsilon_d$ denotes the set of $d$-faces of $\Upsilon$, the $x_\phi$ are indeterminates, and $N$ is a certain integer.  This generating function admits a factorization given in Theorem~\ref{CFfactor} that  generalizes Theorem~\ref{ISF-weighted-factorization}.  Moreover, the specialization obtained by setting $x_\phi=1$ is essentially the product of generating functions for increasing spanning forests of graphs $G_\sigma$ corresponding to certain codimension-2 faces of $\Delta$ as shown in Proposition~\ref{CFdeltaEqCFgraph}).  We conclude the section with a discussion of the difficulty of extending the definition of a perfect elimination order (PEO) to higher dimension.

Section~\ref{multi} generalizes the theory to \emph{labeled multigraphs}: graphs with multiple edges permitted, labeled by nonzero complex numbers.  The definition of an increasing spanning forest and the factorization formula for the ISF generating function in Theorem~\ref{HS1}  carry over easily to this setting.  In this setting, the definition of a perfect elimination order is somewhat more subtle, since it relies on both the vertex ordering and the complex edge labeling.  We associate a complex hyperplane arrangement $\AA(G)$ to each labeled multigraph $G$, generalizing the standard construction of a graphic arrangement, and prove in Theorem~\ref{ISFandChi} that the generating function for ISFs of $G$ is given by the characteristic polynomial of $\AA(G)$ precisely when the vertices of $G$ are labeled by a PEO.  Combined with well-known theorems of Orlik--Solomon and Zaslavsky, this result has consequences for, respectively, Betti numbers of complex multigraph arrangements, Corollary~\ref{betti}, and for counting regions in their real versions, Corollary~\ref{region}.

In Section~\ref{inv}, we study the related class of \emph{tight forests}, in which every path starting at the root avoids the permutation patterns 231, 312, and 321.  The permutations avoiding these patterns are a special class of involutions that  we call \emph{tight involutions}, which are of independent combinatorial interest~\cite{DDJSS}.  Tight forests play an analogous role for triangle-free graphs as ISFs do  for general graphs.  Specifically, if $G$ is a triangle-free graph, then every tight spanning forest is an NBC~set, and the converse is true if the vertex labeling is a \emph{quasi-perfect ordering} or QPO (Definition~\ref{def:QPO}), a variation of the usual definition of a PEO.  Thus the existence of a QPO implies that the chromatic polynomial is a generating function for tight forests as shown in Theorem~\ref{orderingIffNBC}. Structurally, graphs with QPOs satisfy a property analogous to chordal graphs: every cycle of length 5 or greater has a chord, Proposition~\ref{chord}.  These results raise the question of studying labeled forests avoiding other pattern families.


\section{The original theorems revisited}
\label{otr}

In this section we will give a new proof of Theorem~\ref{HS1}.  The starting point is the characterization of increasing forests given in Lemma~\ref{ik,jk}, which will enable us to generalize the theory from graphs to simplicial complexes in Section~\ref{sc}.
We will also prove a refinement of Theorem~\ref{HS2}.  The original proof used the theory of quotient posets developed in~\cite{hs:fcp}.  Our proof does not need those ideas but instead uses Whitney's classic description of the coefficients of the chromatic polynomial in terms of broken circuits.

\ble\label{ik,jk}
A graph $F$ is an increasing forest if and  only if it contains no pair of edges $ik,jk$ with $i,j<k$.
\ele
\bprf
($\Rightarrow$) Suppose that $F$ is an increasing forest which contains two edges $ik,jk$ with $i,j<k$. Let $r$ be the root of the component containing these edges.  Then either the unique path from $r$ to $i$ goes through $k$ or the unique path from $r$ to $j$ goes through $k$ (or both).  But in either case the path contains a descent, either from $k$ to $i$ or from $k$ to $j$.  This contradicts the fact that $F$ is increasing.

($\Leftarrow$) Suppose that $F$ is a graph with no such pair of edges.  Then $F$ must be acyclic, for if $F$ has a cycle $C$, then taking $k$ to be the largest vertex on $C$ and $i,j$ its two neighbors produces a contradiction.

To show that $F$ is increasing, we again assume the opposite.  Let $T$ be a component tree of $F$ with root $r$ such that $r=v_0,v_1,\dots,v_\ell$ is a non-increasing path starting at $r$.  If we choose such a path of minimal length, then $v_0,\dots,v_{\ell-1}$ is an increasing path, but then $v_{\ell-2},v_\ell<v_{\ell-1}$, a contradiction.
\eprf
It is now a simple matter to prove Theorem~\ref{HS1}.

\medskip

\noindent {\em Proof (of Theorem~\ref{HS1}).}
The coefficient of $t^{n-m}$ in $\prod_{k=1}^n (t+|E_k|)$ counts the number of graphs formed by picking $m$ edges of $G$ with at most one from each $E_k$.  By Lemma~\ref{ik,jk}, these graphs are exactly the increasing spanning forests of $G$.
\hqed

\vs{15pt}

Theorem~\ref{HS1} admits a weighted generalization, as follows.  Let $\bx=\{x_e\ |\ e\in E(G)\}$ be a set of commuting indeterminates.  Associate with any spanning subgraph $H\sbe G$ the monomial
$$
\bx_G =\prod_{e\in E(H)} x_e
$$
and define a generating function
$$
\ISF(G,t,\bx)=\sum_F \bx_F t^{n-|E(F)|}
$$
where the sum runs over all increasing spanning forests $F$ of $G$.  Clearly substituting $1$ for each $x_e$ in this polynomial recovers the original $\ISF(G,t)$.  The same proof given above also demonstrates the following result.
\bth\label{ISF-weighted-factorization}
Let $G$ be a graph with $V=[n]$.  Then
$$
\ISF(G,t,\bx)=\prod_{k=1}^n (t + E_k(\bx))
$$
where $E_k(\bx)=\sum_{e\in E_k} x_e$.\hqed
\eth

In order to generalize Theorem~\ref{HS2}, we need to review properties of broken circuits.  Assume that the edges of $G$ have been given a total order $e_1<e_2<\dots<e_p$.  A {\em broken circuit}  of $G$ is an edge set $\brk(C)$ obtained from a cycle $C$ by removing its smallest edge.  An {\em NBC~set} is an edge set containing no broken circuit.  Note in particular that every NBC~set is acyclic, since if $F\supseteq C$ then $F\supseteq\brk(C)$.
Set
\begin{align*}
\cNBC(G) &= \text{the set of NBC subsets of $E(G)$}, & \nbc(G)&=|\cNBC(G)|, \\
\cNBC_m(G)&=\text{the set of NBC subsets of $E(G)$ with $m$ edges}, & \nbc_m(G)&=|\cNBC_m(G)|.
\end{align*}
The notation does not reflect the edge ordering, but it will be clear from context.  In fact the numbers $\nbc_m(G)$ do not depend on the choice of ordering:
\bth[Whitney's formula~\cite{whi:lem}]\label{Whitney}
Let $G$ be a graph with $n$ vertices.  Then
$$
P(G,t) = \sum_{m\ge0} (-1)^m \nbc_m(G) t^{n-m}
$$
regardless of the ordering of the edges of $G$.\hqed
\eth

We next identify the relationship between increasing spanning trees and NBC~sets.
The following result, together with Whitney's formula, immediately implies Theorem~\ref{HS2}.

\bth\label{isf-and-nbc}
Let $G$ be a graph with $V(G)=[n]$.  Order the edges of $G$ lexicographically.
For each $m\ge0$ we have
\beq
\label{ISFinNBC}
\cISF_m(G)\sbe \cNBC_m(G).
\eeq
Furthermore, the following statements are equivalent:
\begin{enumerate}[label=(\alph*)]
\item  $\cISF_m(G)= \cNBC_m(G)$ for all $m\ge0$,
\item  $\cISF_2(G) = \cNBC_2(G)$,
\item  the natural ordering $1,2,\dots,n$ is a PEO for $G$.
\end{enumerate}
\eth

\bprf
First we prove~\eqref{ISFinNBC}.  Let $F$ be an increasing spanning forest.
Suppose that $F$ contains a broken circuit $B$, which must be a path of the form $v_1,v_2,\dots,v_\ell$ with $\ell\geq 3$, $v_1=\min\{v_2,\dots,v_\ell\}$, and $v_2>v_\ell$.  Then there must exist a smallest index $p>1$ such that $v_p>v_{p+1}$, and in particular $v_{p-1},v_{p+1}<v_p$, contradicting the criterion of Lemma~\ref{ik,jk}.  Therefore $F$ is an NBC~set.

Next we show that conditions (a), (b), and (c) are equivalent.

(a)$\Rightarrow$(b): Trivial.

(b)$\Rightarrow$(c): Assume that (b) holds.  Let $i,j,k\in[n]$ with $i<j<k$ and suppose $ik,jk\in E(G)$.  The edge set $\{ik,jk\}$ is not an increasing forest, so by (b) it must contain a broken circuit $B$.  This forces $ij\in E(G)$ as the edge which was removed to form $B=\{ik,jk\}$.  Hence we have established the second condition in Definition~\ref{def:PEO}.

(c)$\Rightarrow$(a): Assume that (c) holds.  Let $F$ be an NBC set; in particular it is a forest.  If it is not increasing, then by Lemma~\ref{ik,jk} it contains two edges $ik,jk$ with $i<j<k$.  But if $ij\in E(G)$ then these two edges from a broken circuit, while if $ij\not\in E(G)$ then the vertex ordering is not a PEO.  In either case we have a contradiction, so $F$ is an increasing spanning forest.
\eprf

Stanley \cite{Stanley-AO} discovered a fundamental relationship between acyclic orientations and the chromatic polynomial, whose best known special case is as follows.

\bth[\cite{Stanley-AO}]
The number of acyclic orientations of $G$ is $\ao(G)=(-1)^{|V(G)|} P(G,-1)$.
\eth
Thus $\ao(G)=\nbc(G)$ by Whitney's formula (Theorem~\ref{Whitney}), and combining these results with Theorem~\ref{isf-and-nbc} immediately yields the following corollary.

\bco \label{isf-ao}
For every graph $G$, we have $\isf(G)\leq\ao(G)$, with equality if and only if the labeling is a PEO.
\eco

Blass and Sagan \cite{BS} constructed a bijection (actually, a family of bijections) between acyclic orientations and NBC~sets of any given graph $G$.  This correspondence, together with the fact that every increasing spanning forest is an NBC~set by Theorem~\ref{isf-and-nbc}, gives a combinatorial explanation of Corollary~\ref{isf-ao}.  (Note that \cite{BS} uses the convention that a broken circuit is obtained by deleting the \emph{largest} edge of a cycle.)


\section{Simplicial complexes}
\label{sc}

In this section we will generalize Theorems~\ref{HS1} and~\ref{HS2} from graphs to simplicial complexes.  
For general background on simplicial complexes, see, e.g., \cite{GreenBook}.
Throughout, we let $\De$ be a pure simplicial complex of dimension~$d\geq 1$, with vertices $V=V(\De)=[n]$.  The symbol $\De_k$ denotes the set of simplices in $\De$ of dimension $k$,
and $\tilde H_k(\De)$ denotes reduced simplicial homology with coefficients in $\bbZ$.  
A subcomplex $\Upsilon\subseteq\Delta$ is a \emph{spanning subcomplex} if it contains all faces of $\Delta$ of dimension $<d$.  (Note that $\Upsilon$ need not be pure.)
Faces of dimensions $d$, $d-1$ and $d-2$ are called \emph{facets}, \emph{ridges}, and \emph{peaks}, respectively.    The notation $\langle\phi_1,\dots,\phi_n\rangle$ indicates the complex generated by the $\phi_i$.  The \emph{link} of a face $\sigma\in\Delta$ is $$
\link_\Delta(\sigma)=\{\tau~|~\tau\cap\sigma=\emptyset,\ \tau\cup\sigma\in\Delta\}.  
$$
Note that $\dim\link_\Delta(\sigma)=\dim\Delta-\dim\sigma-1$. 

We use the notation $[i_0,i_1,\dots,i_\ell]_<$ to indicate the simplex with vertices $i_0<i_1<\cdots<i_\ell$.  We extend this notation to simplices obtained by adjoining vertices to smaller simplices as follows.  If $\sigma$ is an $\ell$-simplex and $i$ is a vertex, then we write $\sigma<i$ to mean that $v<i$ for all $v\in\sigma$, and we denote the $(\ell+1)$-simplex $\sigma\cup\{i\}$ by the symbol $[\sigma,i]_<$.  Similar extensions should be self explanatory,  for instance $[\sigma,i,j]_<$ denotes the $(\ell+2$)-simplex $\sigma\cup\{i,j\}$, where $\sigma<i<j$.  When the vertices in a face are explicit positive integers, we will abbreviate
the simplex to a sequence.  For example, $[1,3,4,6]_<$  will be written $1346$.

A pure simplicial complex of dimension~1 is just a graph with no isolated vertices.  Note that in Section~\ref{otr}, we permitted graphs to contain isolated vertices; however, these have little effect on the polynomials under consideration --- if $G$ is obtained from $H$ by introducing an isolated vertex, then $\ISF(G,t)=t\ISF(H,t)$ and $P(G,t)=t P(H,t)$.  So the polynomials considered in this section will merely differ by a power of~$t$ from those introduced before.

The first step is to generalize the characterization of increasing spanning forests (Lemma~\ref{ik,jk}) to higher dimension.  First we introduce some terminology.

\begin{defn} \label{caged}
Let $\Delta$ be a simplicial complex.
A ridge $\rho=[\si,k]_<$ is {\em caged} if $\De$ contains two facets of the form $\phi_1=[\si,i,k]_<$ and $\phi_2=[\si,j,k]_<$.
\end{defn}

We use the term ``caged" because we regard $\rho$ as being ``trapped'' between the facets $\phi_1$ and $\phi_2$.  Note that in a graph, vertex $k$ is caged if and only if it satisfies the edge-pair criterion of Lemma~\ref{ik,jk}.

As a running example, consider the simplicial complex $\Delta=\langle 123, 124, 134\rangle$ shown in Figure~\ref{complexes}.  Ridge $14$ is caged by facets $124$ and $134$, but ridge $13=[1,3]_<$ is not caged because $\Delta$ has only one facet of the form $[1,j,3]_<$, namely $123$.

\bfi
\begin{tikzpicture}
\newcommand{\vxsize}{0.1}
\coordinate (v1) at (0,0);
\coordinate (v2) at (0,2);
\coordinate (v3) at (-1.73,-1);
\coordinate (v4) at (1.73,-1);
\draw[fill=light] (v3)--(v4)--(v2)--cycle;
\foreach \v in {v1,v2,v3,v4} \fill(\v) circle(\vxsize);
\draw(0,-.5) node{$1$};
\draw(0,2.5) node{$2$};
\draw(-2.25,-1) node{$3$};
\draw(2.25,-1) node{$4$};
\draw[thick] (v3)--(v4)--(v2)--(v3)--(v1)--(v4)  (v1)--(v2);
\draw(0,-2) node{\Large$\De$};

\begin{scope}[shift={(7,0)}]
\coordinate (v1) at (0,0);
\coordinate (v2) at (0,2);
\coordinate (v3) at (-1.73,-1);
\coordinate (v4) at (1.73,-1);
\draw [fill=light] (v3)--(v1)--(v4)--(v2)--cycle;
\foreach \v in {v1,v2,v3,v4} \fill(\v) circle(\vxsize);
\draw(0,-.5) node{$1$};
\draw(0,2.5) node{$2$};
\draw(-2.25,-1) node{$3$};
\draw(2.25,-1) node{$4$};
\draw[thick] (v3)--(v4)--(v2)--(v3)--(v1)--(v4)  (v1)--(v2);
\draw(0,-2) node{\Large$\Upsilon$};
\end{scope}
\end{tikzpicture}
\caption{A simplicial complex $\De$ and a cage-free spanning subcomplex $\Upsilon$.\label{complexes}}
\efi

\begin{defn}
A spanning subcomplex $\Up\subseteq\Delta$ is \emph{cage-free} if it contains no caged ridges. 
\end{defn}

For example, if $\Delta=\langle 123,124,134\rangle$ as in Figure~\ref{complexes}, then the spanning subcomplex $\Upsilon=\langle 123,124,24\rangle$ is cage-free. 

As in~\eqref{ISF-notation}, we introduce the notation
\begin{equation} \label{CF-notation}
\begin{aligned}
\cCF(\Delta) &=\text{set of cage-free subcomplexes of $\Delta$}, & \cf(\Delta) &= |\cCF(\Delta)|,\\
\cCF_m(\Delta)&=\text{set of cage-free subcomplexes of $\Delta$ with $m$ facets}, & \cf_m(\Delta) &= |\cCF_m(\Delta)|,.
\end{aligned}
\end{equation}

When $d=1$ (i.e., $\Delta$ is a graph), this condition specializes to that of Lemma~\ref{ik,jk}: a spanning subcomplex $\Upsilon\subseteq\Delta$ is cage-free precisely if it is an increasing spanning forest.  Cage-free subcomplexes generalize spanning forests in the following additional ways.

\bpr\label{CFfacts}
Let $\Up$ be a cage-free simplicial complex on $V=[n]$ of dimension $d\ge1$.  Then
\begin{enumerate}[label=(\alph*)]
\item $\tilde H_d(\Up)=0$; and
\item If $\Upsilon_d\neq\0$, then $\Upsilon$ has at least one \emph{leaf}, i.e., a ridge contained in exactly one facet.
\end{enumerate}
\epr
\bprf
(a) Suppose, to the contrary, that $\Up$ contains a $d$-cycle $Z$.  Let $k$ be the maximum vertex contained in a $d$-simplex in $Z$.  Choose a ridge of $\Up$ of the form $\rho=[\rho',m,k]_<$, where the vertex $m$ is as small as possible.  We claim that every facet of $Z$  containing $\rho$ must be of the form $[\rho',m,i,k]_<$.  If there is a facet not of this form then, by maximality of $k$, it must have the form $[\si,h,m,k]_<$ for some $\si$ and $h$ where $h<m$.  But then $[\si,h,k]_<$ is a ridge of $Z$ containing $k$ with $h<m$, contradicting the choice of $\rho$ and proving the claim.  Furthermore, $Z$ is a cycle so that it must have at least two facets containing $\rho$.  And by the claim, these two facets cage $\rho$, a contradiction.

(b) This assertion is obtained by replacing $Z$ with $\Upsilon$ in the proof of (a).
\eprf

Assertion (a) of Proposition~\ref{CFfacts} specializes to acyclicity for graphs, and assertion (b) generalizes the statement that every forest with at least one edge has a leaf.

To define the appropriate generating function for cage-free subcomplexes of~$\De$, we will need to consider sets analogous to the edge sets $E_k$ defined in~\ree{E_k}.  For a peak $\si\in\De_{d-2}$ and a vertex $k>\sigma$, define
\beq
\label{Phi_si,k}
\Phi_{\si,k} =\Phi_{\si,k}(\De)=\{\phi\in\De_d\ :\ \text{$\phi=[\si,j,k]_<$ for some $j$ with $\sigma<j<k$}\}.
\eeq
Let $N=N(\Delta)$ be the number of nonempty sets $\Phi_{\si,k}$.  In our previous example, the nonempty sets are
$$
\Phi_{1,3}=\{123\},\ \Phi_{1,4}=\{124, 134\},
$$
and so $N=2$.  Note that the $\Phi_{\si,k}$ are pairwise-disjoint and partition $\De_d$.,  Moreover, the ridge $\rho=[\sigma,k]_<$  is caged in $\Delta$  if and only if $|\Phi_{\si,k}|\ge2$. 

Let $\bx=(x_\phi)$ be a family of commuting variables indexed by facets $\phi\in\Delta_d$.
For each set $\Phi_{\sigma,k}\subset\Delta_d$, define
$$
\Phi_{\si,k}(\bx)=\sum_{\phi\in\Phi_{\si,k}}x_\phi.
$$
For each cage-free spanning subcomplex $\Upsilon\subset\De$, define a monomial
$$
\bx_\Up=\prod_{\phi\in\Up_d} x_\phi.
$$
Define generating functions
$$
\CF(\De,t,\bx)=\sum_\Upsilon \bx_\Upsilon t^{N-|\Upsilon_d|}, \qquad
\CF(\De, t)  = \CF(\De,t,\bx)\big\vert_{x_\phi=1} = \sum_{\Upsilon} t^{N-|\Upsilon_d|},
$$
where $N$ is the number of nonempty sets $\Phi_{\si,k}$ and both sums run over all cage-free subcomplexes $\Up\sbe\De$.     Our example complex has 
\begin{align*}
\CF(\De,t,\bx) &= t^2 +(x_{123}+x_{124}+x_{134})t + (x_{123} x_{124} + x_{123} x_{134})\\
&= (t+x_{123})(t+ x_{124}+x_{134})\\
&= (t+\Phi_{1,3}(\bx)) (t+\Phi_{1,4}(\bx)),\\
\CF(\De,t) &= t^2 +3t+2 = (t+1)(t+2).
\end{align*}

We now have all the pieces in place to state a factorization theorem generalizing Theorem~\ref{ISF-weighted-factorization} (which is the special case $d=1$).

\bth\label{CFfactor}
Let $\De$ be a simplicial complex on $V=[n]$ of dimension  $d\ge1$.  Then
$$
\CF(\De,t,\bx)=\prod_{\si,k}  (t+\Phi_{\si,k}(\bx))
$$
where the product is over all $\si,k$ such that $\Phi_{\si,k}(\De)\neq\emp$.
\eth
\bprf
It follows directly from the definitions that $\Up$ is cage-free precisely when the elements of $\Up_d$ are obtained by picking at most one facet from each $\Phi_{\si,k}$.  Translating this statement into generating functions gives the desired equality.
\eprf

\begin{figure}
\begin{center}
\begin{tikzpicture}
\newcommand{\hsh}{4} 
\coordinate (v1) at (0,0);
\coordinate (v2) at (-.3,-1.2);
\coordinate (v3) at (0,-2);
\coordinate (v4) at (1,-1);
\coordinate (v5) at (-1,-1);
\draw[thick,fill=light] (v1) -- (v5) -- (v2) -- cycle;
\draw[thick,fill=kindoflight] (v1) -- (v4) -- (v2) -- cycle;
\draw[thick,fill=light] (v3) -- (v4) -- (v2) -- cycle;
\draw[thick,fill=kindoflight] (v3) -- (v5) -- (v2) -- cycle;
\draw[thick,dashed] (-1,-1)--(1,-1);
\foreach \v in {v1,v2,v3,v4,v5} \fill(\v) circle (.1);
\draw(0,.4) node{\footnotesize 1};
\draw(0,-1.4) node{\footnotesize 2};
\draw(0,-2.4) node{\footnotesize 3};
\draw(1.4,-1) node{\footnotesize 4};
\draw(-1.4,-1) node{\footnotesize 5};
\draw(0,-3.2) node {\large $\Delta$};

\begin{scope}[shift={(\hsh,0)}]
\coordinate (v1) at (0,0);
\coordinate (v2) at (-.3,-1.2);
\coordinate (v3) at (0,-2);
\coordinate (v4) at (1,-1);
\coordinate (v5) at (-1,-1);
\foreach \v in {v1,v2,v3,v4,v5} \fill(\v) circle (.1);
\draw(0,.4) node{\footnotesize 1};
\draw(0,-1.4) node{\footnotesize 2};
\draw(0,-2.4) node{\footnotesize 3};
\draw(1.4,-1) node{\footnotesize 4};
\draw(-1.4,-1) node{\footnotesize 5};
\draw (v2) -- (v4) -- (v5) -- cycle;
\draw(0,-3.2) node {\large $G_1$};
\end{scope}

\begin{scope}[shift={(2*\hsh,0)}]
\coordinate (v1) at (0,0);
\coordinate (v2) at (-.3,-1.2);
\coordinate (v3) at (0,-2);
\coordinate (v4) at (1,-1);
\coordinate (v5) at (-1,-1);
\foreach \v in {v1,v2,v3,v4,v5} \fill(\v) circle (.1);
\draw(0,.4) node{\footnotesize 1};
\draw(0,-1.2) node{\footnotesize 2};
\draw(0,-2.4) node{\footnotesize 3};
\draw(1.4,-1) node{\footnotesize 4};
\draw(-1.4,-1) node{\footnotesize 5};
\draw (v5)--(v3)--(v4);
\draw(0,-3.2) node {\large $G_2$};
\end{scope}

\begin{scope}[shift={(3*\hsh,0)}]
\coordinate (v1) at (0,0);
\coordinate (v2) at (-.3,-1.2);
\coordinate (v3) at (0,-2);
\coordinate (v4) at (1,-1);
\coordinate (v5) at (-1,-1);
\foreach \v in {v1,v2,v3,v4,v5} \fill(\v) circle (.1);
\draw(0,.4) node{\footnotesize 1};
\draw(0,-1.2) node{\footnotesize 2};
\draw(0,-2.4) node{\footnotesize 3};
\draw(1.4,-1) node{\footnotesize 4};
\draw(-1.4,-1) node{\footnotesize 5};
\draw (v5) -- (v4);
\draw(0,-3.2) node {\large $G_3$};
\end{scope}
\end{tikzpicture}
\end{center}
\caption{A bipyramid $\De$ and its upper links $G_1,G_2,G_3$.\label{bipyFig} }
\end{figure}
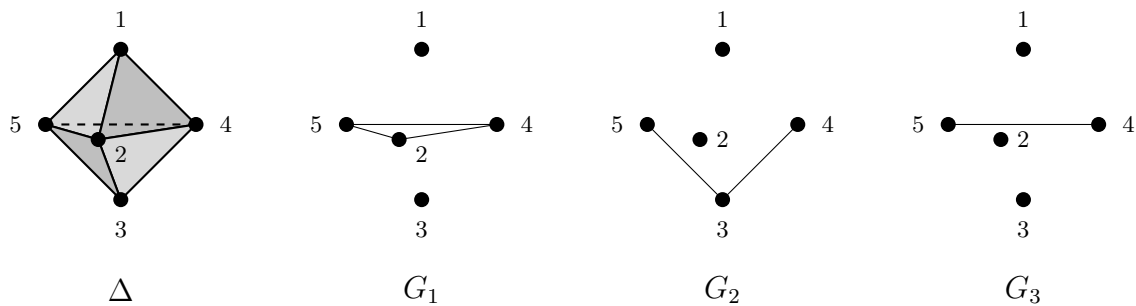

Next we show that, up to a correction factor, the generating function $\CF(\De,t)$ is in fact the product of generating functions $\ISF(G_\sigma,t)$ for a family of graphs associated with~$\Delta$.

\begin{defn}
Let $\sigma$ be a peak (a codimension-2 face) of $\Delta$.  The \emph{upper link} of $\sigma$ is
the graph $G_\sigma$ on $[n]$ with edges $\{ij \mid [\sigma, i,j]_<\in \Delta\}$.
The peak $\sigma$ is called \emph{effective} if $G_\sigma$ has at least one edge.
\end{defn}

For example, let $\Delta$ be the triangular bipyramid with facets $124,125,145, 234, 235, 345$.  Figure~\ref{bipyFig} illustrates $\De$ and the upper links of the effective peaks, namely the vertices 1, 2, and 3.  Moreover,
\begin{align*}
\CF(\Delta, t) &= (t+1)^4(t+2)\\ &= t^{-10}\ISF(G_1,t)\ISF(G_2,t)\ISF(G_3,t).
\end{align*}
The correction factor $t^{-10}$ arises because $10=15-5$ is the difference between the degree of the product of the $\ISF(G_\si,t)$ and the degree of $\CF(\Delta, t)$.

This factorization is an instance of the following general statement.

\begin{prop}\label{CFdeltaEqCFgraph}
Let $\Delta$ be a simplicial complex on $n$ vertices.  Let $N$ be the number of nonempty sets $\Phi_{\sigma,k}$
 and let $s$ be the number of effective peaks of $\Delta$.  Then:
\begin{enumerate}[label=(\alph*)]
\item
The cage-free generating function is given by
$$
\CF(\Delta, t)  = t^{N-ns}\prod_{\text{effective peaks }\sigma} \ISF(G_\sigma, t).
$$
\item The number of cage-free subcomplexes of $\De$ is given by
$$
\cf(\De)  = \prod_{\sigma} \isf(G_\sigma).
$$
\end{enumerate}
\end{prop}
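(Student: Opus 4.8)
The plan is to deduce both parts from the two factorization formulas already in hand: Theorem~\ref{CFfactor} (specialized at $x_\phi=1$, so that $\CF(\De,t)=\prod_{(\si,k):\Phi_{\si,k}\ne\emp}(t+|\Phi_{\si,k}|)$, a product of exactly $N$ linear factors) and Theorem~\ref{HS1} applied to each graph $G_\si$ (so that $\ISF(G_\si,t)=\prod_{k=1}^n(t+|E_k(G_\si)|)$, a product of $n$ linear factors, hence of degree $n$). Thus everything reduces to comparing, for each peak $\si$, the multiset of values $|E_k(G_\si)|$ with the multiset of values $|\Phi_{\si,k}|$, and then keeping track of the leftover powers of~$t$.

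The key combinatorial step is the correspondence between the edge sets $E_k(G_\si)$ and the facet sets $\Phi_{\si,k}$. For a peak $\si$ and a vertex $k>\si$, the map $jk\mapsto[\si,j,k]_<$ is a bijection from $E_k(G_\si)$ onto $\Phi_{\si,k}$, since an edge $jk$ of $G_\si$ written with its smaller vertex first satisfies $[\si,j,k]_<\in\De$, which forces $\si<j<k$. The same observation shows $E_k(G_\si)=\emp$ whenever $k\not>\si$ (any edge of $G_\si$ of the form $jk$ with $k$ the larger vertex forces $k>\max\si$). Hence $|E_k(G_\si)|=|\Phi_{\si,k}|$ for $k>\si$ and $|E_k(G_\si)|=0$ otherwise, so that, writing $N_\si$ for the number of $k$ with $\Phi_{\si,k}\ne\emp$,
$$
\ISF(G_\si,t)=t^{\,n-N_\si}\prod_{k:\Phi_{\si,k}\ne\emp}(t+|\Phi_{\si,k}|).
$$
A non‑effective peak contributes no nonempty $\Phi_{\si,k}$ (its $G_\si$ is edgeless), so the nonempty sets $\Phi_{\si,k}$ are partitioned among the effective peaks and $\sum_{\text{eff }\si}N_\si=N$. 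Taking the product over effective peaks and invoking $\CF(\De,t)=\prod_{(\si,k):\Phi_{\si,k}\ne\emp}(t+|\Phi_{\si,k}|)$ yields $\prod_{\text{eff }\si}\ISF(G_\si,t)=t^{\,ns-N}\CF(\De,t)$, which is part (a) after dividing by $t^{ns-N}$.

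Part (b) then follows by setting $t=1$ in (a): $\cf(\De)=\CF(\De,1)$ and $\isf(G_\si)=\ISF(G_\si,1)$, and a non‑effective peak has $\ISF(G_\si,1)=1^n=1$, so the product over effective peaks equals the product over all peaks. (Alternatively, (b) admits a direct bijective proof: by the proof of Theorem~\ref{CFfactor} a cage-free $\Up$ is determined by $\Up_d$, which picks at most one facet from each $\Phi_{\si,k}$; decomposing $\Up_d$ according to which peak consists of the $d-1$ smallest vertices of a facet and transporting along $[\si,j,k]_<\leftrightarrow jk$ identifies $\Up$ with a tuple $(F_\si)_\si$ of increasing spanning forests $F_\si$ of $G_\si$, using Lemma~\ref{ik,jk}.)

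I expect the only genuine obstacle to be the bookkeeping of the powers of~$t$: one must verify that the $n-N_\si$ trivial factors of each $\ISF(G_\si,t)$ add up to exactly $ns-N$, which relies on $\ISF(G_\si,t)$ having degree $n$, on the nonempty $\Phi_{\si,k}$ being partitioned among effective peaks, and on restricting the product in (a) to effective peaks (while in (b) the surplus factors are harmless because each equals~$1$). The bijection $E_k(G_\si)\leftrightarrow\Phi_{\si,k}$ and the vanishing $E_k(G_\si)=\emp$ for $k\not>\si$ are routine but should be written out carefully, since that is where the ordering conventions encoded in the bracket notation $[\si,j,k]_<$ enter.
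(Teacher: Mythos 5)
Your proof is correct, and it takes a genuinely different, more factor-by-factor route than the paper. The paper's proof works at the level of objects: it forms the disjoint union $G=\bigsqcup_{\text{eff }\si}G_\si$, sets up a single bijection $\gamma:\Delta_d\to E(G)$ sending $\phi=[\si,i,j]_<\mapsto ij$, and invokes Lemma~\ref{ik,jk} to conclude that $\Up$ is cage-free iff $\gamma(\Up_d)$ is an ISF of $G$; the degree count $N-ns$ then appears as an afterthought comparing $\deg\CF$ and $\deg\ISF$. You instead work at the level of the already-established factorizations, matching each linear factor $(t+|\Phi_{\si,k}|)$ of $\CF(\De,t)$ (Theorem~\ref{CFfactor}) with a factor $(t+|E_k(G_\si)|)$ of $\ISF(G_\si,t)$ (Theorem~\ref{HS1}) via the per-pair bijection $jk\leftrightarrow[\si,j,k]_<$, and account explicitly for the $n-N_\si$ trivial factors of $t$ from each $\ISF(G_\si,t)$. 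The two are equivalent in content (both hinge on the same facet--edge correspondence), but yours makes the origin of the $t^{N-ns}$ correction entirely transparent and also surfaces the small point — glossed over in the paper — that non-effective peaks contribute $\isf(G_\si)=1$, so the product in~(b) may be taken over all peaks. The parenthetical bijective argument you sketch for~(b) is essentially the paper's proof.
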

\bprf
First note that (b) follows from (a) by setting $t=1$.  To prove (a), 
let $G$ be the disjoint union of all graphs $G_\sigma$, where $\sigma$ is an effective peak.
By ``disjoint" we mean that when the same edge of $\De$  occurs in many $G_\si$, the different copies are considered distinct in $G$.
Since
$$
\ISF(G,t)  = \prod_\sigma \ISF(G_\sigma, t)
$$
it is enough to show that
$$
\CF(\Delta, t)  = t^{N-ns}\ISF(G,t) .
$$

Each facet $\phi$ can be written uniquely as $[\sigma,i,j]_<$, and so gives rise to a unique edge $ij=\gamma(\phi)\in E(G)$
where we are considering $ij$ as an edge of $G_\si$.  This map $\gamma:\Delta_d\to E(G)$ is bijective because it has an inverse: given $ij\in E(G_\si)$ then the corresponding facet is $[\sigma,i,j]_<$.   Now, a spanning subcomplex $\Upsilon\subseteq\Delta$ is cage-free if and only if the corresponding edge set $\gamma(\Upsilon)$ contains no two edges $ik,jk$ with $i,j<k$.   And by Lemma~\ref{ik,jk} this is precisely the statement that $\gamma(\Upsilon)$ is the edge set of an ISF of $G$.

It follows that $\CF(\De,t)$ and $\ISF(G,t)$ are equal up to multiplying by a power of $t$.   By Theorem~\ref{CFfactor},  $\deg \CF(\De,t)=N$, and $\deg\ISF(G,t)=ns$.  So $t^{N-ns}$ is the appropriate correction factor.
\eprf

Next we discuss simplicial extensions of the concept of a perfect elimination ordering.

\begin{defn} \label{simplicial-PEO}
Let $\Delta$ be a pure simplicial complex of dimension $d\geq 1$ with vertices $1,\dots,n$.   The labeling is a \emph{perfect elimination ordering (PEO)} if for all $(d-2)$-faces $\sigma$ and vertices $i,j,k$ with $\sigma<i<j<k$, we have
$$
[\sigma, i,k]_<,\ [\sigma,j,k]_< \in \Delta \quad\Rightarrow\quad [\sigma, i, j]_<\in\Delta.
$$
\end{defn}

If $\De$ is a graph, then $\sigma=\emptyset$ and so Definition~\ref{simplicial-PEO} reduces to 
the definition of a PEO of a graph.  Recall our example of the bipyramid (see Figure~\ref{bipyFig}) with facets, $124, 125,145, 234, 235, 345$. It is easy to directly check that this labeling is a PEO of the bipyramid.

It is not hard to see that a labeling of the vertex set of $\Delta$ is a PEO if and only if the induced labeling of each $G_\sigma$ is a PEO.  Together with Theorem~\ref{HS2}, Corollary~\ref{isf-ao}, and Proposition~\ref{CFdeltaEqCFgraph}, we get the following corollary.

\begin{cor} \label{more-about-PEO}
Let $\Delta$ be a simplicial complex on $n$ vertices.  
\begin{itemize}
\item[(a)] As above, let $N$ be the number of nonempty sets $\Phi_{\sigma,k}$ and let $s$ be the number of effective peaks.  Then
$$
\CF(\Delta, t)  =(-1)^{ns} t^{N-ns}\prod_{\sigma} P(G_\sigma, -t)
$$
where  the product is over all peaks $\sigma$ such that $[\sigma, i,j]_< \in \Delta$ if and only if the ordering $1,2,\dots, n$ is a PEO of $\Delta$.
\item[(b)] We have the following relationship between cage-free subcomplexes of $\De$ and acyclic orientations:
$$
\cf(\De) \leq \prod_{\sigma} \ao(G_\sigma) 
$$
with equality if and only if the ordering $1,2,\dots, n$ is a PEO of $\Delta$.
\end{itemize}
\end{cor}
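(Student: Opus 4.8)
The plan is to bootstrap from the one-dimensional case, where everything is already in place, by quoting Proposition~\ref{CFdeltaEqCFgraph} to pass from $\Delta$ to its upper links $G_\sigma$ and then applying the graph-level results factor by factor. First I would record the compatibility of perfect elimination orderings with this decomposition: the labeling $1,\dots,n$ is a PEO of $\Delta$ in the sense of Definition~\ref{simplicial-PEO} if and only if it is a PEO of every $G_\sigma$. This is immediate, since for a fixed peak $\sigma$ the graph-PEO condition for $G_\sigma$, namely $ik,jk\in E(G_\sigma)\Rightarrow ij\in E(G_\sigma)$, is precisely the $\sigma$-instance of Definition~\ref{simplicial-PEO} (an edge $uv$ of $G_\sigma$ being the same thing as a facet $[\sigma,u,v]_<$ of $\Delta$), so ranging over all peaks $\sigma$ recovers Definition~\ref{simplicial-PEO} verbatim.

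For part~(a), I would begin from the unconditional identity $\CF(\Delta,t)=t^{N-ns}\prod_\sigma\ISF(G_\sigma,t)$ of Proposition~\ref{CFdeltaEqCFgraph}(a), the product running over the $s$ effective peaks. Applying Theorem~\ref{HS2} to each factor, $\ISF(G_\sigma,t)=(-1)^{n}P(G_\sigma,-t)$ holds exactly when $1,\dots,n$ is a PEO of $G_\sigma$, and multiplying the $s$ such factors produces the global sign $(-1)^{ns}$. So if $1,\dots,n$ is a PEO of $\Delta$, it is a PEO of each $G_\sigma$, every factor identity holds, and we obtain the asserted formula $\CF(\Delta,t)=(-1)^{ns}t^{N-ns}\prod_\sigma P(G_\sigma,-t)$. (Including non-effective peaks changes nothing: such a $G_\sigma$ is edgeless, so $P(G_\sigma,-t)=(-t)^n$, and these factors are already absorbed into the power of $t$.)

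The converse direction of~(a) and all of~(b) hinge on upgrading an equality of products to an equality of individual factors, and this is the only step requiring real care. If the formula in~(a) holds, comparing it with Proposition~\ref{CFdeltaEqCFgraph}(a) gives $\prod_\sigma\ISF(G_\sigma,t)=\prod_\sigma(-1)^nP(G_\sigma,-t)$; evaluating at $t=1$ and using $\isf(G)=\ISF(G,1)$ and $\ao(G)=(-1)^nP(G,-1)$ yields $\prod_\sigma\isf(G_\sigma)=\prod_\sigma\ao(G_\sigma)$. Since Corollary~\ref{isf-ao} gives $\isf(G_\sigma)\le\ao(G_\sigma)$ for every $\sigma$ and all these quantities are positive integers, equality of the products forces $\isf(G_\sigma)=\ao(G_\sigma)$ for each $\sigma$, and hence, again by Corollary~\ref{isf-ao}, the labeling of every $G_\sigma$ is a PEO, so by the compatibility noted above that of $\Delta$ is too. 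Part~(b) is then essentially a restatement: Proposition~\ref{CFdeltaEqCFgraph}(b) gives $\cf(\Delta)=\prod_\sigma\isf(G_\sigma)$, multiplying the Corollary~\ref{isf-ao} inequalities gives $\cf(\Delta)\le\prod_\sigma\ao(G_\sigma)$, and the positivity argument just described shows equality holds if and only if each $\isf(G_\sigma)=\ao(G_\sigma)$, i.e.\ if and only if $1,\dots,n$ is a PEO of $\Delta$. I do not anticipate any obstacle beyond this product-to-factor bookkeeping, which is exactly where the universal inequality $\isf\le\ao$ of Corollary~\ref{isf-ao} earns its keep.
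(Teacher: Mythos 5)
Your proof is correct and follows exactly the route the paper indicates: the paper states that the corollary follows from the observation that the vertex labeling is a PEO of $\Delta$ iff it is a PEO of every $G_\sigma$, combined with Theorem~\ref{HS2}, Corollary~\ref{isf-ao}, and Proposition~\ref{CFdeltaEqCFgraph}, which are precisely the ingredients you use. Your explicit treatment of the product-to-factor step (evaluating at $t=1$ and invoking positivity together with $\isf\leq\ao$) is the right way to fill in the bookkeeping the paper leaves implicit.
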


Which simplicial complexes have perfect elimination orderings?  It is well known that a graph has a PEO if and only if it is chordal, but the situation in higher dimension is much more complicated.
Higher-dimensional extensions of various equivalent characterizations of chordality, have been studied by, e.g., H\`{a} and Van~Tuyl \cite{HVT}, Emtander \cite{Emt}, Woodroofe \cite{Woodroofe}, and Adiprasito, Nevo and Samper \cite{ANS}.

A simplicial complex $\Delta$ on vertex set $[n]$ is called \emph{shifted} if, whenever $\sigma\in\Delta$ is a face and $j<k$ with $j\not\in\Delta$ and $k\in\Delta$, then $\sigma\sm\{k\}\cup\{j\}$ is a face.  Shifted complexes of dimension~1 are called \emph{threshold graphs}; both these classes are  well known in combinatorics.
The vertex labeling on a shifted complex is always a PEO, but not every complex with a PEO is shifted.  For instance, this is true of the bipyramid of Figure~\ref{bipyFig}, which cannot be made shifted even by relabeling the vertices.  In dimension~$1$, this observation reduces to the statement that the threshold graphs are a proper subset of the chordal graphs.

The following is a connection between our notion of a PEO and chordality of graphs.
\begin{prop} \label{has-chordal-link}
Let $\Delta$ be a simplicial complex.  If $\Delta$ has a PEO, then there is some peak $\sigma$ whose link is a chordal graph.
\end{prop}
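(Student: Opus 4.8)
The plan is to exhibit an explicit peak whose \emph{full} link $\link_\Delta(\sigma)$ coincides, as a graph, with its upper link $G_\sigma$. This suffices because, as noted just before Corollary~\ref{more-about-PEO}, a PEO of $\Delta$ restricts to a PEO of every $G_\sigma$, and a graph possessing a PEO is chordal. The peak I would choose is $\sigma$, the lexicographically smallest $(d-2)$-face of $\Delta$, where a face is identified with the increasing sequence of its vertices. (When $d=1$ the only peak is $\emptyset$ and the statement is trivial, so assume $d\ge 2$.)

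The heart of the matter is the following claim: the lex-minimal peak $\sigma$ has no \emph{lower neighbour}, i.e.\ there is no vertex $v\notin\sigma$ with $v<\max\sigma$ and $\sigma\cup\{v\}\in\Delta$. I would argue by contradiction. Given such a $v$, purity lets us extend the ridge $\sigma\cup\{v\}$ to a facet $\phi=\sigma\sqcup\{v,w\}$, which has $d+1$ vertices. Let $\sigma'\subset\phi$ consist of the $d-1$ smallest vertices of $\phi$; since $\sigma'$ is a subset of the facet $\phi$, it is a $(d-2)$-face of $\Delta$. Comparing $\sigma'$ with $\sigma$, which is another $(d-1)$-subset of $\phi$: the standard fact that, listed in increasing order, the $d-1$ smallest elements of $\phi$ are coordinatewise $\le$ the sorted list of any $(d-1)$-subset of $\phi$ gives $\sigma'\le_{\mathrm{lex}}\sigma$; moreover $\sigma'\ne\sigma$, since $\sigma'=\sigma$ would make $\{v,w\}=\phi\setminus\sigma$ the two largest vertices of $\phi$, contradicting $v<\max\sigma$. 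Hence $\sigma'<_{\mathrm{lex}}\sigma$, contradicting minimality.

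Granting the claim, the conclusion is immediate. ``No lower neighbour'' says that every vertex $v$ with $\{v\}\cup\sigma\in\Delta$ satisfies $v>\max\sigma$; applying this to both extra vertices of any facet through $\sigma$ shows that $\{i,j\}$ is an edge of $\link_\Delta(\sigma)$ exactly when $[\sigma,i,j]_<\in\Delta$, i.e.\ exactly when $ij\in E(G_\sigma)$. Thus $\link_\Delta(\sigma)$ and $G_\sigma$ have the same edges (and the same non-isolated vertices), so $\link_\Delta(\sigma)$ is chordal because $G_\sigma$ is.

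I expect the only genuine work to be the bookkeeping inside the claim---pinning down the coordinatewise comparison $\sigma'\le_{\mathrm{lex}}\sigma$ and checking that the various possible positions of $v$ and $w$ relative to $\sigma$ are all subsumed by that one inequality. It is worth noting that the PEO hypothesis enters only in the final step: the existence of a peak with no lower neighbour holds for any pure complex, while the boundary of the octahedron (no peak has chordal link, and it admits no PEO) shows that the hypothesis is genuinely needed.
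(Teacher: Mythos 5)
Your proof is correct and follows essentially the same route as the paper: both choose the lexicographically smallest peak $\sigma$ and show that every edge of $\link_\Delta(\sigma)$ has both endpoints greater than $\max\sigma$, so the link coincides (up to isolated vertices) with the upper link $G_\sigma$, which is chordal because the PEO restricts to one. The paper argues this more directly---for an edge $ij\in\link_\Delta(\sigma)$ with $i<\max\sigma$, one can simply replace $\max\sigma$ by $i$ to get a lex-smaller peak---whereas your detour through purity and a completing facet $\phi$ is correct but not needed.
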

\begin{proof}
Let $\sigma$ be the lexicographically smallest peak, and let $ij$ be an edge with $i<j$.
If $ij\in\link_\Delta(\sigma)$ then $\sigma<i<j$, else we could replace  the greatest vertex of $\sigma$ with $i$ to obtain a lexicographically smaller peak.  Therefore $\link_\Delta(\sigma)$ coincides with the upper link $G_\sigma$ up to isolated vertices, and is chordal by the remarks preceding Corollary~\ref{more-about-PEO}.
\end{proof}

Note that while the bipyramid over a triangle has a PEO as we have seen in  Figure~\ref{bipyFig}, no bipyramid over a polygon with more than three sides has a PEO.  This is because peaks are vertices, and every link of a vertex is a cycle of length at least $4$ which is not a chordal graph.  
In particular, whether a simplicial complex has a PEO cannot be determined by its topology which is true even in dimension~$1$.  This example also illustrates that shellability does not imply the existence of a PEO.  Indeed, in dimension~$1$, any cycle  is shellable but not chordal if it has length at least $4$.  Neither is the converse true: the ``bowtie'' complex consisting of two triangles joined at a vertex is not shellable, but every labeling is trivially a PEO since each ridge belongs to only one facet.

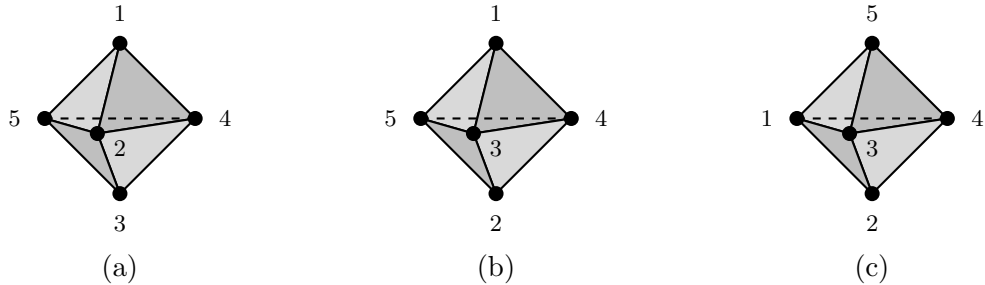
\begin{figure}
\begin{center}
\begin{tikzpicture}[scale=1]
\coordinate (v1) at (0,0);
\coordinate (v2) at (-.3,-1.2);
\coordinate (v3) at (0,-2);
\coordinate (v4) at (1,-1);
\coordinate (v5) at (-1,-1);
\draw[thick,fill=light] (v1) -- (v5) -- (v2) -- cycle;
\draw[thick,fill=kindoflight] (v1) -- (v4) -- (v2) -- cycle;
\draw[thick,fill=light] (v3) -- (v4) -- (v2) -- cycle;
\draw[thick,fill=kindoflight] (v3) -- (v5) -- (v2) -- cycle;
\draw[thick,dashed] (-1,-1)--(1,-1);
\foreach \v in {v1,v2,v3,v4,v5} \fill(\v) circle (.1);
\draw(0,.4) node{\footnotesize 1};
\draw(0,-1.4) node{\footnotesize 2};
\draw(0,-2.4) node{\footnotesize 3};
\draw(1.4,-1) node{\footnotesize 4};
\draw(-1.4,-1) node{\footnotesize 5};
\draw(0,-3) node {(a)};

\begin{scope}[shift={(5,0)}]
\coordinate (v1) at (0,0);
\coordinate (v2) at (-.3,-1.2);
\coordinate (v3) at (0,-2);
\coordinate (v4) at (1,-1);
\coordinate (v5) at (-1,-1);
\draw[thick,fill=light] (v1) -- (v5) -- (v2) -- cycle;
\draw[thick,fill=kindoflight] (v1) -- (v4) -- (v2) -- cycle;
\draw[thick,fill=light] (v3) -- (v4) -- (v2) -- cycle;
\draw[thick,fill=kindoflight] (v3) -- (v5) -- (v2) -- cycle;
\draw[thick,dashed] (-1,-1)--(1,-1);
\foreach \v in {v1,v2,v3,v4,v5} \fill(\v) circle (.1);
\draw(0,.4) node{\footnotesize 1};
\draw(0,-1.4) node{\footnotesize 3};
\draw(0,-2.4) node{\footnotesize 2};
\draw(1.4,-1) node{\footnotesize 4};
\draw(-1.4,-1) node{\footnotesize 5};
\draw(0,-3) node {(b)};
\end{scope}

\begin{scope}[shift={(10,0)}]
\coordinate (v1) at (0,0);
\coordinate (v2) at (-.3,-1.2);
\coordinate (v3) at (0,-2);
\coordinate (v4) at (1,-1);
\coordinate (v5) at (-1,-1);
\draw[thick,fill=light] (v1) -- (v5) -- (v2) -- cycle;
\draw[thick,fill=kindoflight] (v1) -- (v4) -- (v2) -- cycle;
\draw[thick,fill=light] (v3) -- (v4) -- (v2) -- cycle;
\draw[thick,fill=kindoflight] (v3) -- (v5) -- (v2) -- cycle;
\draw[thick,dashed] (-1,-1)--(1,-1);
\foreach \v in {v1,v2,v3,v4,v5} \fill(\v) circle (.1);
\draw(0,.4) node{\footnotesize 5};
\draw(0,-1.4) node{\footnotesize 3};
\draw(0,-2.4) node{\footnotesize 2};
\draw(1.4,-1) node{\footnotesize 4};
\draw(-1.4,-1) node{\footnotesize 1};
\draw(0,-3) node {(c)};
\end{scope}
\end{tikzpicture}
\end{center}
\caption{Three different labelings of the bipyramid.\label{threeBips}}
\end{figure}

For a graph, Corollary~\ref{isf-ao} gives an upper bound for the number of increasing spanning forests and shows that the upper bound is achieved by labeling the vertices with a PEO.  In particular, the number of increasing spanning forests does not depend on the choice of PEO, and a PEO could be defined as a labeling which maximizes the number of increasing spanning forests.  These properties are not in general true for PEOs of a simplicial complex.  Consider the three labelings of the bipyramid shown in Figure~\ref{threeBips}.  Labelings (a) and (b) are PEOs, but not (c) because $135$ and $145$ are simplices, but not $134$.  On the other hand, labelings (a) and (c) give rise to the same cage-free generating function, namely $CF(\Delta,t)=(t+1)^4(t+2)$, while for labeling (b) one has instead $(t+1)^2(t+2)^2$.  The number of cage-free spanning subcomplexes is in fact maximized by both labelings (a) and (c).

If the top homology of a graph is trivial, then it is a forest, hence is chordal and has a PEO.  In higher dimension, vanishing top homology does not guarantee existence of a PEO.  For example, the dunce hat is contractible, hence acyclic, but one can check that no labeling of the eight-vertex triangulation of the dunce hat given in~\cite{Hach} is a PEO.  Note that the link of the vertex labeled 1 in~\cite{Hach} is a chordal graph, so that the dunce hat is a counterexample to the converse of Proposition~\ref{has-chordal-link}.

\begin{question}
Can one classify all simplicial complexes which have a PEO? 
\end{question}


\section{ISFs in multigraphs}
\label{multi}

In this section we generalize the theory of increasing spanning trees from graphs to multigraphs.

\begin{defn}
Let $n$ be a positive integer.  A \emph{labeled $n$-multigraph} is a multigraph $G=(V,E)$ such that:
\begin{enumerate}[label=(\alph*)]
\item $V=\{0,1,\dots,n\}$;
\item $G$ has no loops, and at most one edge $0k$ for each $k\in[n]$;
\item For $1\leq i<j\leq n$, each edge between~$i$ and $j$ is labeled with a nonzero complex number $\zeta$ and denoted by $ij^\zeta$.  No two edges with the same endpoints can have the same label.
\end{enumerate}
\end{defn}

We retain the notation~\eqref{ISF-notation} for ISFs of a labeled multigraph~$G$.  We also define
\begin{equation}\label{E_kMulti}
E_k = E_k(G) = \{e\in E(G) : e=jk^\gamma \text{ and } j, k \neq 0\} \cup \{e\in E(G) : e=0k\}.
\end{equation}
(cf.~\eqref{E_k}).

We will make the convention that two edges with the same endpoints form a cycle.  The characterization of increasing spanning forests (Lemma~\ref{ik,jk}) carries over to the setting of multigraphs, as does the factorization for the generating function $\ISF(G,t)$ (Theorem~\ref{HS1}).

For example, let $G$ be the labeled multigraph $G$ shown in Figure~\ref{multigraphFig}.  Then
\begin{equation} \label{edge-partn-example}
E_1 =\{01\},\quad  E_2 = \{12^\alpha, 12^\beta\},\quad E_3 = \{03,13^\gamma\}
\end{equation}
and
\[
\ISF(G,t) = t^3+5t^2+8t+4 = (t+1)(t+2)^2 = (t+|E_1|)(t+|E_2|)(t+|E_3|).
\]

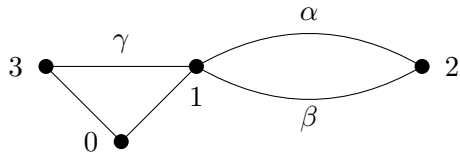
\begin{figure}[ht]
\begin{center}
\begin{tikzpicture}
\foreach \v in {(-1,1),(-2,2),(0,2),(3,2)} \fill \v circle(.1);
\draw(-1.4,1)  node{$0$};
\draw(-2.4,2) node{$3$};
\draw(0,1.6) node{$1$};
\draw(3.4,2) node{$2$};
\draw(0,2) -- (-2,2) -- (-1,1) -- (0,2) to[bend left] (3,2) to[bend left] (0,2);
\draw(1.5, 2.7) node {$\alpha$};
\draw(1.5, 1.3) node {$\beta$};
\draw(-1,2.3) node{$\gamma$};
\end{tikzpicture}
\caption{A labeled multigraph $G$.}\label{multigraphFig}
\end{center}
\end{figure}

We assume familiarity with the basic theory of posets, M\"obius functions, and hyperplane arrangements, as in Chapter~3 of~\cite{EC1}, and we will adopt the notation therein.  For convenience, we will refer to a hyperplane simply by its defining equation.

Let $G$ be a labeled multigraph on vertex set $\{0,\dots,n\}$.  Define a hyperplane arrangement in~$\Cc^n$ by
\begin{equation} \label{define-arrangement}
\AA(G) = \{ x_i=\gamma x_j ~|~ ij^\gamma \in E(G) \} \cup \{x_k=0 ~|~ 0k\in E(G)\}.
\end{equation}
This construction generalizes the usual one of a graphic hyperplane arrangement.
For example, if $G$ is the multigraph of Figure~\ref{multigraphFig}, then $\AA(G)$ is the arrangement in~$\Cc^3$ with five hyperplanes $x_1=0$, $x_1=\alpha x_2$, $x_1=\beta x_2$, $x_1 = \gamma x_3$, $x_3=0$.  The intersection lattice $L(G)=L(\AA(G))$ is shown in Figure~\ref{intLatFig}.

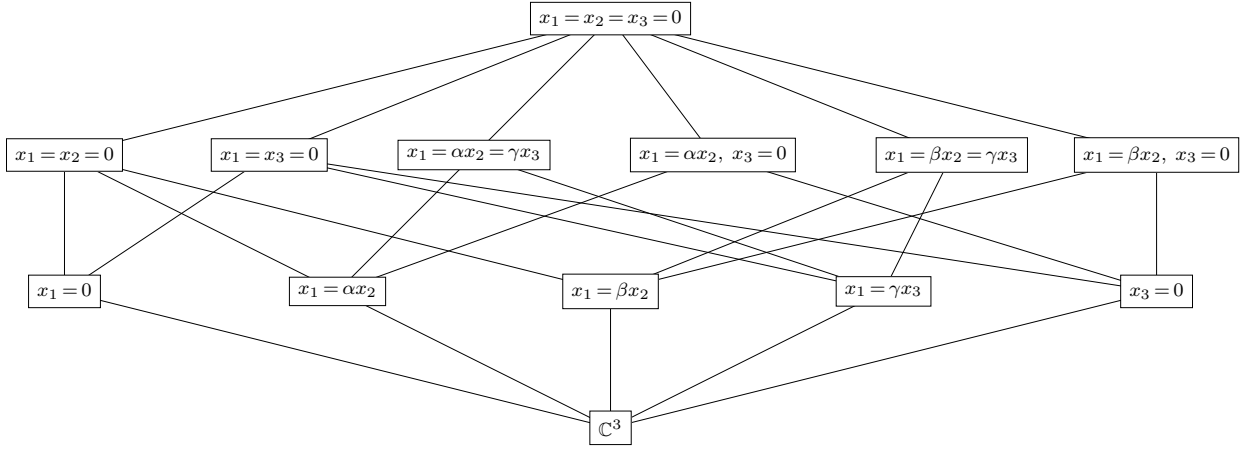
\begin{figure}[ht]
 \noindent\resizebox{\textwidth}{!}{\begin{tikzpicture}
\coordinate (A) at (-8,4);
\coordinate (B) at (-5,4);
\coordinate (C) at (-2,4);
\coordinate (D) at (1.5,4);
\coordinate (E) at (5,4);
\coordinate (F) at (8,4);
\foreach \xco in {-8,-4,0,4,8} \draw (0,0) -- (\xco,2);
\foreach \v in {A,B} \draw (-8,2) -- (\v);
\foreach \v in {A,C,D} \draw (-4,2) -- (\v);
\foreach \v in {A,E,F} \draw (0,2) -- (\v);
\foreach \v in {B,C,E} \draw (4,2) -- (\v);
\foreach \v in {B,D,F} \draw (8,2) -- (\v);
\foreach \v in {A,B,C,D,E,F} \draw (0,6) -- (\v);
\node[draw,rectangle,fill=white] at (0,0) { $\scriptstyle \mathbb{C}^3$};
\node[draw,rectangle,fill=white] at (-8,2) {$\scriptstyle x_1\,=\,0$};
\node[draw,rectangle,fill=white] at (-4,2) {$\scriptstyle x_1\,=\,\alpha x_2$};
\node[draw,rectangle,fill=white] at (0,2) {$\scriptstyle x_1\,=\,\beta x_2$};
\node[draw,rectangle,fill=white] at (4,2) {$\scriptstyle x_1\,=\,\gamma x_3$};
\node[draw,rectangle,fill=white] at (8,2) {$\scriptstyle x_3\,=\,0$};
\node[draw,rectangle,fill=white] at (A) {$\scriptstyle x_1\,=\,x_2\,=\,0$};
\node[draw,rectangle,fill=white] at (B) {$\scriptstyle x_1\,=\,x_3\,=\,0$};
\node[draw,rectangle,fill=white] at (C) {$\scriptstyle x_1\,=\,\alpha x_2\,=\, \gamma x_3$};
\node[draw,rectangle,fill=white] at (D) {$\scriptstyle x_1\,=\,\alpha x_2,\  x_3\,=\,0$};
\node[draw,rectangle,fill=white] at (E) {$\scriptstyle x_1\,=\,\beta x_2\,=\,\gamma x_3$};
\node[draw,rectangle,fill=white] at (F) {$\scriptstyle x_1\,=\,\beta x_2,\  x_3\,=\,0$};
\node[draw,rectangle,fill=white] at (0,6) {$\scriptstyle x_1\,=\,x_2\,=\,x_3\,=\,0$};
\end{tikzpicture} }
\caption{The intersection lattice $L(G)$ for the multigraph $G$ in Figure~\ref{multigraphFig}.}\label{intLatFig}
\end{figure}

Its characteristic polynomial is
$$
\chi(L(G),t) ~=~  t^3-5t^2 +8t-4 ~=~ (t-1)(t-2)^2 ~=~ (-1)^3\ISF(G,-t).
$$
It is not a coincidence that the two polynomials are related.  To explain the relationship, we need more about intersection lattices.

Let $L$ be a lattice.  Recall that the elements of~$L$ that cover $\hat{0}$ are called \emph{atoms}, and that a \emph{multichain} is a totally ordered multisubset of~$L$.  Let $C$ be an $\hat{0}$-$\hat{1}$ multichain~$C$, i.e., a multichain of the form $\hat{0}=z_0\leq z_1\leq\dots \leq z_n=\hat{1}$.  Then $C$ induces an ordered partition of the atoms into blocks $A_1,A_2,\dots, A_n$, namely
\begin{equation} \label{atom-induced}
A_i = \{\text{atoms } a ~\mid~ a\leq z_i \text{ and } a\not\leq z_{i-1}\}.
\end{equation}
Note that some blocks can be empty, e.g., if $z_{i-1}=z_i$ for some $i$.
For example, if $L$ is the lattice in Figure~\ref{intLatFig}, then the sturated chain
\[\Cc^3\quad\lessdot\quad x_1=0 \quad\lessdot\quad x_1=x_2=0\quad\lessdot\quad x_1=x_2=x_3=0\]
induces the atom partition
$$
A_1 = \{x_1=0\},  \quad A_2=\{x_1= \alpha x_2,\  x_1 =\beta x_2\}, \quad A_3 =\{x_1=\gamma x_3,\  x_3=0\}.
$$
which corresponds to the partition of $E(G)$ given in~\eqref{edge-partn-example}.

\begin{defn} \label{perfect-labeling}
A labeled $n$-multigraph $G$ is \emph{perfectly labeled} if for all nonzero $i<j<k$ the  following hold:
\begin{enumerate}
\item[(1)] If $G$ has edges $ik^\alpha$ and $jk^\beta$, then it also has an edge $ij^{\alpha/\beta}$.
\item[(2)] If $G$ has edges $jk^{\gamma}$ and $jk^{\epsilon}$ with $\gamma\neq \epsilon$, then it also has an edge $0j$.
\item[(3)] If $G$ has edges $jk^{\gamma}$ and $0k$, then it also has an edge $0j$.
\end{enumerate} 
\end{defn}
In each of these cases, the third edge corresponds to a hyperplane whose defining equation is implied algebraically by those of the first two edges.  For instance, the first condition says that if $\AA(G)$ contains the hyperplanes $x_i=\alpha x_k$ and $x_j=\beta x_k$, then it also contains the hyperplane $x_i/x_j=\alpha/\beta$, i.e., $x_i=(\alpha/\beta)x_j$.  Thus perfect labelings are the analogues of PEOs in the setting of labeled multigraphs.  Unlike the definition of a PEO, a perfect labeling is not simply an ordering of the vertices.  On the other hand, any PEO of a (simple) graph can be regarded as a perfect labeling by assigning all edges label~1.

For instance, one can check that the multigraph in Figure~\ref{multigraphFig} is perfectly labeled. 

\begin{thm}\label{ISFandChi}
Let $G$ be a labeled $n$-multigraph.  Let $L=L(G)$ and let $\rho(L)$ denote the rank of~$L$.  Then 
$$
\ISF(G,t) = (-1)^{\rho(L)}  t^{n-\rho(L)}\chi(L,-t)
$$
if and only if $G$ is perfectly labeled.
\end{thm}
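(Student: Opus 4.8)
The plan is to translate the identity into a statement about the characteristic polynomial of the arrangement $\AA(G)$ and then to evaluate that polynomial by counting points over finite fields. First, a bookkeeping reduction. By the multigraph form of Theorem~\ref{HS1} we have $\ISF(G,t)=\prod_{k=1}^{n}(t+|E_k|)$. Let $\bar\chi(t)$ denote the characteristic polynomial of $\AA(G)$ normalized as an arrangement in $\Cc^{n}$, so that $\bar\chi(t)=t^{\,n-\rho(L)}\chi(L,t)$; the usual sign relation for characteristic polynomials then reads $(-1)^{\rho(L)}t^{\,n-\rho(L)}\chi(L,-t)=(-1)^{n}\bar\chi(-t)$. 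Hence the theorem is equivalent to the polynomial identity
$$
\bar\chi(t)=\prod_{k=1}^{n}\bigl(t-|E_k|\bigr).
$$
Both sides have degree $n$, and when equality holds, comparing orders of vanishing at $t=0$ recovers $\rho(L)=\#\{k:E_k\neq\emptyset\}$.

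Next, evaluate $\bar\chi$ by the finite-field method. The defining equations of $\AA(G)$ have coefficients in the number field $K=\bbQ(\{\gamma\})$ generated by the edge labels, so for all but finitely many primes $\mathfrak p$ of $\cO_K$ the reduction of $\AA(G)$ modulo $\mathfrak p$ has the same intersection lattice and retains the relevant non-vanishing conditions ($\gamma\neq 0$, $\gamma\neq\epsilon$ for parallel edges, and so on). For such $\mathfrak p$, with $q=|\cO_K/\mathfrak p|$,
$$
\bar\chi(q)=\#\bigl\{v\in\bbF_q^{\,n}\ :\ v_i\neq\gamma v_j\ \text{for all }ij^{\gamma}\in E,\ \ v_k\neq 0\ \text{for all }0k\in E\bigr\}.
$$
Build $v$ one coordinate at a time. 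Having chosen a legal $v_1,\dots,v_{k-1}$, the edges of $E_k$ forbid exactly the values in the set $\Phi_k(v_{<k})=\{v_j/\gamma:jk^{\gamma}\in E_k\}$ (enlarged by $0$ when $0k\in E_k$), so there are $q-|\Phi_k(v_{<k})|$ choices for $v_k$, and $|\Phi_k(v_{<k})|\le |E_k|$ always.

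The heart of the argument is that Definition~\ref{perfect-labeling} is precisely the condition forcing $|\Phi_k(v_{<k})|=|E_k|$ for \emph{every} legal prefix. Two of the forbidden values collide only in one of three ways: $v_j/\gamma=v_j/\epsilon$ with $jk^{\gamma},jk^{\epsilon}\in E_k$ (iff $v_j=0$); $v_i/\alpha=v_j/\beta$ with $ik^{\alpha},jk^{\beta}\in E_k$ and $i<j$ (iff $v_j=(\beta/\alpha)v_i$); or $v_j/\gamma=0$ with $jk^{\gamma}\in E_k$ and $0k\in E_k$ (iff $v_j=0$). Conditions (2), (1), (3) respectively guarantee that $G$ contains the edge ($0j$, $ij^{\alpha/\beta}$, $0j$) whose hyperplane involves only coordinates $<k$ and is therefore avoided by every legal prefix, thus ruling out the collision. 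Consequently, if $G$ is perfectly labeled then $|\Phi_k(v_{<k})|=|E_k|$ throughout, $\bar\chi(q)=\prod_{k=1}^{n}(q-|E_k|)$ for infinitely many $q$, and the identity of the first paragraph follows. Conversely, suppose a condition fails at a triple $i<j<k$ (or a pair $j<k$). Then for $q\gg 0$ there is a legal prefix realizing the corresponding collision: the prefixes legal among the first $k-1$ coordinates and satisfying the extra linear equation ($v_i=(\alpha/\beta)v_j$, resp.\ $v_j=0$) form the complement of an arrangement inside an affine space of dimension $k-2\ge 0$ — this is exactly where the failure of the condition is used, namely that the extra hyperplane is \emph{not} already present in $\AA(G)$ — so their number is a monic polynomial in $q$ (the constant $1$ in the degenerate case), hence positive. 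For such a prefix $|\Phi_k(v_{<k})|<|E_k|$, and feeding this into the coordinate-by-coordinate count gives
$$
\bar\chi(q)\ \ge\ \prod_{l=1}^{n}(q-|E_l|)\ +\ \prod_{l=k+1}^{n}(q-|E_l|)\ >\ \prod_{l=1}^{n}(q-|E_l|)
$$
for $q\gg 0$; so the two polynomials differ and the asserted identity fails.

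The steps I expect to demand the most care are the finite-field reduction in the second paragraph — identifying a cofinite set of primes over which the intersection lattice and all the needed non-vanishings survive, which is routine but must be set up correctly for complex edge labels — and the existence of a collision-realizing legal prefix in the converse direction, where one must check that it is not accidentally excluded by the other hyperplanes of $\AA(G)$. As a sanity check, on the worked example one has $\bar\chi(t)=\chi(L(G),t)=(t-1)(t-2)^2=\prod_{k=1}^{3}(t-|E_k|)$. (Alternatively one could run the proof of Theorem~\ref{isf-and-nbc} verbatim at the level of the frame matroid $M(\AA(G))$, with the broken-circuit theorem for geometric lattices in place of Whitney's formula, at the cost of first describing the circuits of $M(\AA(G))$ — balanced cycles, handcuffs, and half-edge configurations — and checking that the three conditions of Definition~\ref{perfect-labeling} govern exactly which two-edge sets are non-increasing yet broken-circuit-free.)
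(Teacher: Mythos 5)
Your proof is correct in outline and takes a genuinely different route from the paper. The paper deduces the factorization $\chi(L,t)=t^{\rho(L)-n}\prod(t-|A_i|)$ from the quotient-poset machinery of~\cite{hs:fcp} (Theorem~18 and Lemma~19 there), observing that the multichain $V_0\le V_1\le\cdots\le V_n$ of coordinate subspaces induces the atom partition $A_k\leftrightarrow E_k$, and that the join-condition characterizing such factorizations is exactly Definition~\ref{perfect-labeling}. You instead prove the equivalent polynomial identity $\bar\chi(t)=\prod_k(t-|E_k|)$ by counting points over finite fields, reading off each $q-|E_k|$ as the number of choices for the $k$-th coordinate and identifying the three conditions of Definition~\ref{perfect-labeling} as precisely what prevents collisions among the forbidden values. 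Your translation of the statement, the collision analysis, and the strict-inequality argument in the converse direction are all correct, as is the sanity check on the example. What your approach buys is a concrete, self-contained proof not dependent on the lattice-quotient theory; what the paper's approach buys is directness (no base change) and uniformity over arbitrary complex labels.

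The one place where your argument has a real gap, which you flag but underestimate, is the opening sentence of paragraph two: $\bbQ(\{\gamma\})$ need not be a number field, since the edge labels are arbitrary nonzero complex numbers and may be transcendental. The finite-field reduction cannot then proceed via primes of $\cO_K$ as written. This is fixable, but it requires an actual argument: either work over a finitely generated $\bbZ$-subalgebra $R\subset\Cc$ containing the labels (with enough elements inverted so that $\Spec R$ keeps the relevant rank conditions on coefficient matrices), and reduce modulo maximal ideals with finite residue field; or observe that both sides of the identity, together with the intersection-lattice data and the perfectly-labeled condition, depend on the labels only through finitely many polynomial (non)vanishings over $\bbQ$, and so one may specialize transcendental labels to algebraic ones lying in the same stratum of the parameter space before running your argument. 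Either route is standard, but calling it ``routine'' glosses over the fact that one of these two additional reductions is genuinely needed before the Crapo--Rota count $\bar\chi(q)=\#(\bbF_q^n\sm\cup\AA)$ is available.
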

\begin{proof}
Let $V_m$ be the vector space obtained by intersecting all hyperplanes of the form $x_j=0$ and $x_i =\alpha x_j$  where $i< j\leq m$.  Note that $V_0=\Cc^n=\hat{0}_L$ and $V_n=\hat{1}_L$.  Let $C$ be the $\hat{0}$-$\hat{1}$ multichain $\hat{0}=V_0\leq V_1\leq\cdots \leq V_n=\hat{1}$ and let $(A_1,A_2,\dots, A_n)$ be the partition of the atom set induced by~$C$.  Then combining
Theorem~18 and Lemma~19 of~\cite{hs:fcp}, one sees that
\begin{equation} \label{hs-factor-chi}
\chi(L,t) = t^{\rho(L)-n}\prod_{i=1}^n (t-|A_i|)
\end{equation}
if and only if for every $x\in L$ which is the join of two elements from $A_k$ there exists a $j$ such that there is a unique atom below $x$ in $A_j$.   This second statement is precisely the condition that $G$ is perfectly labeled.  
Moreover, for each $i$, the edges in $E_i(G)$ (see~\eqref{E_kMulti}) correspond to the hyperplanes in $A_i$, so $|E_i|=|A_i|$.  It follows that
$$
\ISF(G,t) = \prod_{i=1}^n (t+|E_i|) = (-1)^n\prod_{i=1}^n(-t-|A_i|).
$$
Thus by~\eqref{hs-factor-chi},
$$
\ISF(G,t) = \frac{(-1)^n}{(-t)^{\rho(L)-n}} \chi(L,-t)
$$
if and only if the labeling of $G$ is perfect.  The result now follows.
\end{proof}

We now briefly review some concepts related to NBC~sets of geometric lattices.  For details, see, e.g., Lectures~3 and~4 of \cite{s:aiha}.  Let $L$ be a geometric lattice with rank function $\rho$.  For $S\subseteq L$, the symbol $\vee S$ denotes the join of all elements in~$S$.  A set $S$ of atoms of $L$ is \emph{independent} if $\rho(\vee S) = |S|$, and is a \emph{circuit} if it is a minimal dependent set.  (These terms refer to the matroid naturally associated with~$L$.)  Note that if $K$ is a circuit, then $\join K=\join(K\sm\{a\})$ for any $a\in K$.  If we fix a total order on the atoms, then $S$ is a \emph{broken circuit} if it is obtained by removing the smallest atom from a circuit.  An \emph{NBC~set} of $L$ is a set which does not contain a broken circuit.  Rota~\cite[Prop.~1]{R} proved that
\begin{equation} \label{Rota}
\chi(L,t) =  \sum_{m\geq 0} (-1)^m \nbc_m(L)t^{\rho(L)-m}
\end{equation}
where $\nbc_m(L)$ is the number of nbc sets of $L$ with $m$ atoms.
(When $L$ is the lattice of flats of a graph, Rota's formula reduces to Whitney's formula (Theorem~\ref{Whitney}).)
Combining Rota's formula with Theorem~\ref{ISFandChi}, we see that if $G$ is a labeled $n$-multigraph, then $\isf_m(G) = \nbc_m(L(G))$ for all $m$ if and only if $G$ is perfectly labeled.

Let $L$ be any lattice and let $(A_1,A_2,\dots, A_{n})$ be a partition of the atom set induced by a $\hat{0}$-$\hat{1}$ multichain, as in~\eqref{atom-induced}.   An \emph{atomic transversal} is a set $T$ of atoms such that $|T\cap A_i|\leq 1$ for each~$i$.  By~\cite[Lemma 19]{hs:fcp}, every atomic transversal is independent.  Evidently,
$$
\sum_T t^{n-|T|}=\prod_{k=1}^n(t+|A_k|)
$$
where the sum is over atomic transversals of $L$.  It follows from
Theorem~\ref{ISFandChi} that if $L=L(G)$ where $G$  is a labeled $n$-multigraph then the number of atomic transversals of size~$m$ is precisely $\isf_m(G)$.  Moreover, atomic transversals are related to NBC~sets in the following way.

\begin{prop}\label{transNBC}
Let $L$ be a geometric lattice and let $C:\hat{0}=z_0\leq z_1\leq \cdots \leq z_n =\hat{1}$ be a  $\hat{0}$-$\hat{1}$ multichain in $L$.  Let $(A_1,A_2,\dots, A_n)$ be the partition of the atoms induced by $C$.  Fix a total ordering of the atoms so that if $a\in A_i$ and $b\in A_j$ with $i<j$, then $a$ precedes $b$.  If $T$ is an atomic transversal, then $T$ is an NBC~set.
\end{prop}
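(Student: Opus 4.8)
The plan is to argue by contraposition: supposing the atomic transversal $T$ contains a broken circuit, I produce a contradiction. So let $K$ be a circuit of $L$ whose smallest atom (in the fixed total order) is $a$, and suppose $B:=K\sm\{a\}\sbe T$.

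First I would extract the constraints coming from the transversal hypothesis. Every atomic transversal is independent by \cite[Lemma~19]{hs:fcp}; since $T\spe B$ while $K=B\cup\{a\}$ is dependent, this forces $a\notin T$ (otherwise $K\sbe T$, contradicting independence of $T$). Next, because $T$ meets each block $A_i$ in at most one atom, the atoms of $B$ lie in pairwise distinct blocks; listing them as $t_1<t_2<\cdots<t_r$ with $t_m\in A_{i_m}$, compatibility of the order with the block partition forces $i_1<i_2<\cdots<i_r$. The same compatibility, together with $a<t_1$ (valid since $a=\min K$ and $a\notin B$), shows that $a$ lies in a block whose index is at most $i_1$. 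Finally, since $L$ is geometric its underlying matroid is simple, so $|K|\ge 3$ and hence $r\ge 2$; in particular $i_{r-1}\le i_r-1$.

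The crux is then one rank inequality. As $K$ is a circuit, each of its elements lies in the join of the remaining ones; applying this to $t_r$ gives $t_r\le\join\bigl(K\sm\{t_r\}\bigr)=\join\bigl(\{a\}\cup\{t_1,\dots,t_{r-1}\}\bigr)$. Every atom in the set $\{a,t_1,\dots,t_{r-1}\}$ lies in a block of index at most $i_{r-1}\le i_r-1$, and the atoms below $z_k$ are exactly those in $A_1\cup\cdots\cup A_k$; hence each of these atoms is $\le z_{i_r-1}$, so $\join(\{a\}\cup\{t_1,\dots,t_{r-1}\})\le z_{i_r-1}$ and therefore $t_r\le z_{i_r-1}$. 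But $t_r\in A_{i_r}$ means precisely that $t_r\le z_{i_r}$ and $t_r\not\le z_{i_r-1}$, a contradiction. Hence $T$ contains no broken circuit, i.e.\ $T$ is an NBC~set.

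I do not expect a substantive obstacle here; the only delicate points are the bookkeeping showing that the blocks of $t_1,\dots,t_r$ are strictly increasing (which uses both the transversal property and the order--block compatibility) and the use of simplicity of the matroid of $L$ to guarantee $r\ge 2$, which disposes of the degenerate two-element ``circuit'' case.
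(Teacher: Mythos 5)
Your proof is correct and is essentially the same as the paper's: both isolate the element of the circuit lying in the highest-indexed block (your $t_r$, the paper's $b$), observe that every other circuit element---including the deleted minimum $a$---lies in a strictly lower block and hence below $z_{i_r-1}$, and then contradict the circuit property that $t_r$ lies below the join of the remaining elements. The only cosmetic differences are that the paper first reduces to the case where $T$ is itself a broken circuit (since subsets of transversals are transversals) and phrases the final contradiction as $\join(K\setminus\{b\})=\join T$ with $\join(K\setminus\{b\})\le z_{j-1}$ but $\join T\not\le z_{j-1}$, whereas you work directly with $B\sbe T$ and conclude $t_r\le z_{i_r-1}$.
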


\begin{proof}
Every subset of a transversal is a transversal, so it is enough to show that no atomic transversal can be a broken circuit.

Suppose that $T$ is an atomic transversal which is also a broken circuit, say $T=K\setminus\{a\}$ where $K$ is a circuit and $a=\min(K)$.  In particular $|K|\geq3$ and $|T|\geq2$.  Let $A_i$ be the block containing~$a$.  As mentioned above, $T$ is independent, so $T\cap A_i$ must be nonempty, otherwise $K$ would also be an atomic transversal, hence independent.  Let $j= \max \{k \mid T\cap A_k \neq \emptyset\}$. Since $|T|\geq 2$ and $a=\min(K)$, it follows that $j>i$. Let $b$ be the unique element of $T\cap A_j$ and let $S = K\setminus \{b\}$.  Since $K$ is a circuit, we have $\vee K = \vee S = \vee T$.  On the other hand, $\vee S \leq z_{j-1}$ since, by the choice of~$j$, all elements of $S$ are less than or equal to  $z_{j-1}$.  But $\vee T\not\leq z_{j-1}$ since $b\in T$ and $b\not\leq z_{j-1}$.  This is a contradiction.
\end{proof}

\begin{cor}\label{isfNBCCor}
Let $G$ be a labeled $n$-multigraph.  Then $\isf_{m}(G) \leq \nbc_m(L(G))$ for all $m$.  Moreover, $\isf_{m}(G) =\nbc_m(L(G))$ for all $m$ if and only if $G$ is perfectly labeled.
\end{cor}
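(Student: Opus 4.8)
The plan is to assemble three facts that are already available: that when $L=L(G)$ the atomic transversals of $L$ of size $m$ are counted exactly by $\isf_m(G)$ (noted in the paragraph preceding the corollary); Proposition~\ref{transNBC}, that every atomic transversal of a geometric lattice is an NBC~set; and Rota's formula~\eqref{Rota} together with Theorem~\ref{ISFandChi}. Of the two assertions only the pointwise inequality is genuinely new; the ``moreover'' clause just repackages the remark made right after Rota's formula that $\isf_m(G)=\nbc_m(L(G))$ for all $m$ if and only if $G$ is perfectly labeled.

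For the inequality, I would fix $m$ and a total order on the atoms of $L=L(G)$ compatible, in the sense of Proposition~\ref{transNBC}, with the atom partition $(A_1,\dots,A_n)$ induced by the multichain $\hat{0}=V_0\le V_1\le\cdots\le V_n=\hat{1}$ from the proof of Theorem~\ref{ISFandChi}. Since $|A_k|=|E_k(G)|$ (established in that proof) and $\ISF(G,t)=\prod_{k=1}^n(t+|E_k|)$ by the multigraph version of Theorem~\ref{HS1}, comparison with the identity $\sum_T t^{n-|T|}=\prod_{k=1}^n(t+|A_k|)$ shows that there are exactly $\isf_m(G)$ atomic transversals of size~$m$. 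By Proposition~\ref{transNBC} each such transversal is an NBC~set for this order, and distinct transversals are distinct subsets of the atom set; hence the size-$m$ atomic transversals inject into the NBC~sets of $L(G)$ with $m$ atoms, giving $\isf_m(G)\le\nbc_m(L(G))$.

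For the equivalence, I would sum the inequality against $t^{n-m}$: the polynomial $\sum_m \nbc_m(L(G))\,t^{n-m}-\ISF(G,t)$ then has nonnegative coefficients and vanishes identically precisely when $\isf_m(G)=\nbc_m(L(G))$ for every $m$. Substituting $-t$ for $t$ in Rota's formula and collecting signs gives $\sum_m \nbc_m(L(G))\,t^{n-m}=(-1)^{\rho(L)}t^{n-\rho(L)}\chi(L,-t)$, so the displayed polynomial is zero exactly when $\ISF(G,t)=(-1)^{\rho(L)}t^{n-\rho(L)}\chi(L,-t)$, which by Theorem~\ref{ISFandChi} holds if and only if $G$ is perfectly labeled.

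I do not expect a genuine obstacle: all the substance is in Proposition~\ref{transNBC} and Theorem~\ref{ISFandChi}. The only points requiring care are the sign and $t$-power bookkeeping in passing from $\chi(L,-t)$ to $\sum_m\nbc_m(L(G))\,t^{n-m}$, and the (standard, and implicit in Rota's formula) fact that $\nbc_m(L(G))$ does not depend on the chosen order of the atoms, so that the particular order demanded by Proposition~\ref{transNBC} causes no loss of generality.
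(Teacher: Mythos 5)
Your proof is correct and follows essentially the same route as the paper's: the inequality is obtained by identifying size-$m$ atomic transversals with elements of $\cISF_m(G)$ and then invoking Proposition~\ref{transNBC}, and the equivalence comes from combining Rota's formula~\eqref{Rota} with Theorem~\ref{ISFandChi}. The paper gives this argument in two sentences; you simply spell out the sign/$t$-power bookkeeping and the order-independence of $\nbc_m$ explicitly, which is harmless and faithful to the intended proof.
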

\begin{proof}
The inequality follows from Proposition~\ref{transNBC}, together with the earlier observation that the numbers of ISFs with $m$ edges equals the number of atomic transversals of size~$m$.  The second assertion follows from Rota's formula~\eqref{Rota} together with Theorem~\ref{ISFandChi}.
\end{proof}

Next we discuss consequences for the topology of the complement of $\AA_G$ and its real analogue.  Recall that the \emph{$i^{th}$ Betti number} $\beta_i(X)$ of a topological space $X$ is the dimension of the $i^{th}$ (unreduced) homology group of $X$ with coefficients in~$\Rr$.   Let $\AA$ be an arrangement in $\mathbb{C}^n$ and let $X(\AA)=\mathbb{C}^n\setminus \cup\AA$.  A well-known result of Orlik and Solomon~\cite[Theorem 5.2]{os:ctch} states that $\beta_m(X(\AA)) = \nbc_{n-m}(L(\AA))$.   An immediate consequence of the previous result  is the following.

\begin{cor} \label{betti}
Let $G$ be a labeled $n$-multigraph.  Then
\[
\isf_m(G) \leq \beta_{n-m}(X(\AA(G)))
\]
for all $m$ with equality for all $m$ if and only if $G$ is perfectly labeled.
\end{cor}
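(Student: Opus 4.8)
The plan is to chain together two results already in hand. First I would invoke the Orlik--Solomon theorem quoted just above the statement, namely $\beta_m(X(\AA)) = \nbc_{n-m}(L(\AA))$ for any arrangement $\AA$ in $\Cc^n$. Applying this with $\AA = \AA(G)$ and replacing the index $m$ by $n-m$ yields
$\beta_{n-m}(X(\AA(G))) = \nbc_{n-(n-m)}(L(\AA(G))) = \nbc_m(L(G))$,
so the right-hand side of the asserted inequality is literally $\nbc_m(L(G))$.

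Second, I would apply Corollary~\ref{isfNBCCor}, which says that $\isf_m(G) \le \nbc_m(L(G))$ for all $m$, with equality for all $m$ precisely when $G$ is perfectly labeled. Substituting the identity from the previous step immediately gives $\isf_m(G) \le \beta_{n-m}(X(\AA(G)))$ for all $m$, together with the stated equality criterion, which completes the argument.

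Since every ingredient is already established, there is essentially no obstacle here; the only points needing minor care are the bookkeeping of indices in the Orlik--Solomon formula (so that $\beta_{n-m}$ matches $\nbc_m$, not $\nbc_{n-m}$) and the observation that the geometric lattice to which Corollary~\ref{isfNBCCor} applies is exactly $L(G) = L(\AA(G))$ by definition. If a fully self-contained account were wanted, one could instead unwind Corollary~\ref{isfNBCCor} through Proposition~\ref{transNBC}, Rota's formula~\eqref{Rota}, and Theorem~\ref{ISFandChi}, but citing the corollary directly is the cleanest route.
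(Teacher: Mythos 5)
Your proposal is correct and is exactly the paper's argument: Corollary~\ref{betti} is stated there as ``an immediate consequence'' of Corollary~\ref{isfNBCCor} together with the Orlik--Solomon identity $\beta_m(X(\AA)) = \nbc_{n-m}(L(\AA))$, and your index bookkeeping matches. Nothing further is needed.
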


We now consider the case that $G$ is an \emph{$\Rr$-labeled multigraph}, i.e., all labels are nonzero real numbers.  Then formula~\eqref{define-arrangement} may be viewed as defining an arrangement~$\AA_\Rr(G)$ in $\Rr^n$, with the same intersection lattice as the complex arrangement~$\AA(G)$.  As before, we define $X(\AA_\Rr)=\Rr^n\setminus \cup\AA$.  Zaslavsky~\cite{z:fua} showed that for every real hyperplane arrangement $\AA$,  the number of regions $r(\AA)$ of $X(\AA)$ is given by $\nbc(L(\AA))$.  Observe that the proofs of Theorem~\ref{ISFandChi} and Proposition~\ref{transNBC} go through without change upon replacing $\AA(G)$ by $\AA_\Rr(G)$, as does Corollary~\ref{isfNBCCor}, with the following consequence.

\begin{cor}\label{region}
Let $G$ be an $\Rr$-labeled $n$-multigraph.  Then
$$
\isf(G)\leq r(\AA_\Rr(G))
$$
with equality if and only if $G$ is perfectly labeled.
\end{cor}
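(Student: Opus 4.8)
The plan is to assemble \textref{} this statement from three ingredients already established: Zaslavsky's count of regions in terms of the intersection lattice, the NBC interpretation of the characteristic polynomial (Rota's formula~\eqref{Rota}), and the equality criterion of Corollary~\ref{isfNBCCor}. First I would observe that the real arrangement $\AA_\Rr(G)$ and the complex arrangement $\AA(G)$ are defined by exactly the same linear equations~\eqref{define-arrangement}, so they share an intersection lattice, $L(\AA_\Rr(G)) = L(\AA(G)) = L(G)$, which is a geometric lattice. Consequently any statement about $\AA(G)$ depending only on the intersection lattice --- in particular Theorem~\ref{ISFandChi}, Proposition~\ref{transNBC}, and Corollary~\ref{isfNBCCor} --- transfers verbatim to $\AA_\Rr(G)$, as noted in the text preceding the corollary.

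Next I would invoke Zaslavsky's theorem in the form quoted above: $r(\AA_\Rr(G)) = \nbc(L(\AA_\Rr(G))) = \nbc(L(G))$. (If one wants to derive this rather than cite it directly, combine Zaslavsky's formula $r(\AA)=(-1)^{\rho(L)}\chi(L,-1)$ for a central real arrangement with Rota's formula~\eqref{Rota}, which yields $(-1)^{\rho(L)}\chi(L,-1)=\sum_{m\ge0}\nbc_m(L)=\nbc(L)$.) On the other side, Corollary~\ref{isfNBCCor} gives $\isf_m(G)\le\nbc_m(L(G))$ for every $m$, with simultaneous equality in all $m$ precisely when $G$ is perfectly labeled. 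Summing over $m$, and using $\isf(G)=\sum_m\isf_m(G)$ together with $\nbc(L(G))=\sum_m\nbc_m(L(G))$, produces the asserted inequality $\isf(G)\le\nbc(L(G))=r(\AA_\Rr(G))$.

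Finally, for the equality case: if $G$ is perfectly labeled then $\isf_m(G)=\nbc_m(L(G))$ for all $m$ by Corollary~\ref{isfNBCCor}, and summing gives $\isf(G)=r(\AA_\Rr(G))$. Conversely, if $\isf(G)=r(\AA_\Rr(G))$ then $\sum_m\isf_m(G)=\sum_m\nbc_m(L(G))$, and since the summands obey $\isf_m(G)\le\nbc_m(L(G))$ term by term, this forces $\isf_m(G)=\nbc_m(L(G))$ for every $m$, so $G$ is perfectly labeled again by Corollary~\ref{isfNBCCor}. I do not expect a genuine obstacle here; the corollary is essentially a packaging of earlier results, and the only points deserving a moment's care are verifying that the real and complex arrangements have the same intersection lattice (so that the geometric-lattice machinery and the complex-case corollaries carry over) and observing that a termwise inequality between two nonnegative integer sequences with equal sums must be an equality in each term.
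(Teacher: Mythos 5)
Your proposal is correct and takes essentially the same route as the paper: identify $L(\AA_\Rr(G))$ with the complex intersection lattice $L(G)$ so that Corollary~\ref{isfNBCCor} applies, invoke Zaslavsky's formula $r(\AA_\Rr(G))=\nbc(L(\AA_\Rr(G)))$, and then sum the termwise inequality $\isf_m(G)\le\nbc_m(L(G))$ over $m$, with the equality case handled by the termwise-equality criterion of Corollary~\ref{isfNBCCor}. The only small remark is that the paper phrases the transfer to the real setting as ``the proofs of Theorem~\ref{ISFandChi}, Proposition~\ref{transNBC}, and Corollary~\ref{isfNBCCor} go through with $\AA(G)$ replaced by $\AA_\Rr(G)$,'' whereas you simply observe that the two arrangements share an intersection lattice so those results apply directly; both are valid and amount to the same thing.
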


We now mention a generalization of Theorem~\ref{HS2} to signed graphs.  Let $G$ be a labeled $n$-multigraph with at most two edges between any two nonzero vertices.  If every edge between nonzero vertices of $G$ is labeled by $1$ or $-1$ we say that $G$ is a \emph{signed graph}.  This is essentially equivalent to the \emph{signed graphs} studied by Zaslavsky in~\cite{Z}.  Our signed graphs have the additional vertex~0, and an edge of the form~$0i$ in our setting  corresponds to a half-edge at~$i$ in~\cite{Z}.

A \emph{coloring} of a signed graph $G$ is a function 
$$
c:V(G)\setminus\{0\}\to[-s,s] = \{-s,-s+1,\dots, 0, \dots, s-1,s\},
$$ 
where $s$ is some nonnegative integer.  A coloring is \emph{proper} provided that
\ben
\item[(i)] for all $i,j\neq 0$, if $ij^\epsilon\in E(G)$, then $c(i)\neq \epsilon c(j)$; and
\item[(ii)] if $0i\in E(G)$, then $c(i) \neq 0$. 
\een
These definitions correspond to those in~\cite{Z}, and our real hyperplane arrangement $\AA_\Rr(G)$ coincides with Zaslavsky's $H[G]$.

As for an ordinary graph, the \emph{chromatic function} $P(G,t)$ of a signed graph $G$ is the number of proper colorings of $G$ with $t=2s+1$ colors.  It follows from~\cite[Theorem~2.2]{Z} that if $G$ is a signed graph with vertex set $\{0,1\dots,n\}$, then $P(G,t)$ is a polynomial, specifically,
$$
P(G,t) = t^{n-\rho(L)}\chi(L(\AA_\Rr(G),t)).
$$
Using this fact and Theorem~\ref{ISFandChi}, we have the following generalization of Theorem~\ref{HS2}.

\begin{thm}
Let $G$ be a signed graph with vertex set $\{0,1\dots,n\}$.  Then
$$
\ISF(G,t) = (-1)^n P(G,-t)
$$
if and only if $G$ is perfectly labeled.
\end{thm}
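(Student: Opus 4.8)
The plan is to derive this theorem as an immediate consequence of Theorem~\ref{ISFandChi} together with the cited formula of Zaslavsky expressing the chromatic function of a signed graph in terms of the characteristic polynomial of its intersection lattice. Concretely, I would proceed as follows. First, fix a signed graph $G$ on vertex set $\{0,1,\dots,n\}$ and set $L = L(\AA_\Rr(G))$, noting that this lattice coincides with $L(\AA(G)) = L(G)$ since the real and complex arrangements defined by~\eqref{define-arrangement} have the same intersection lattice. By the quoted consequence of \cite[Theorem~2.2]{Z}, we have $P(G,t) = t^{n-\rho(L)}\chi(L,t)$, where $\rho(L) = \rho(L(G))$ is the rank of~$L$.

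Next, I would invoke Theorem~\ref{ISFandChi}, which states that $\ISF(G,t) = (-1)^{\rho(L)} t^{n-\rho(L)}\chi(L,-t)$ if and only if $G$ is perfectly labeled. Substituting $-t$ into Zaslavsky's formula gives
$$
P(G,-t) = (-t)^{n-\rho(L)}\chi(L,-t) = (-1)^{n-\rho(L)} t^{n-\rho(L)} \chi(L,-t).
$$
Hence $(-1)^n P(G,-t) = (-1)^n (-1)^{n-\rho(L)} t^{n-\rho(L)}\chi(L,-t) = (-1)^{\rho(L)} t^{n-\rho(L)}\chi(L,-t)$, using $(-1)^{2n} = 1$. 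Comparing this with the expression in Theorem~\ref{ISFandChi}, we see that $(-1)^n P(G,-t)$ equals exactly the right-hand side of the displayed formula in that theorem. Therefore $\ISF(G,t) = (-1)^n P(G,-t)$ holds if and only if $\ISF(G,t) = (-1)^{\rho(L)} t^{n-\rho(L)}\chi(L,-t)$, which by Theorem~\ref{ISFandChi} holds if and only if $G$ is perfectly labeled.

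There is essentially no serious obstacle here, since all the hard work has been done in Theorem~\ref{ISFandChi}; the only point requiring a little care is the bookkeeping of signs and powers of~$t$ in passing from $\chi(L,-t)$ to $P(G,-t)$ and back, and the observation that the relevant lattice is the same whether one works over $\Rr$ or $\Cc$ (which has already been remarked upon in the paragraph preceding Corollary~\ref{region}). One should also double-check that the hypotheses of \cite[Theorem~2.2]{Z} match our conventions for signed graphs, in particular the correspondence between our edges of the form $0i$ and Zaslavsky's half-edges, and the identification of $\AA_\Rr(G)$ with Zaslavsky's $H[G]$ — both of which are asserted in the surrounding text. Given these identifications, the theorem follows immediately.
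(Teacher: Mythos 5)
Your proof is correct and is exactly the argument the paper intends: the result is a direct corollary of Theorem~\ref{ISFandChi} and the quoted form of Zaslavsky's theorem $P(G,t)=t^{n-\rho(L)}\chi(L,t)$, and you have carried out the routine but necessary sign bookkeeping $(-1)^n(-1)^{n-\rho(L)}=(-1)^{\rho(L)}$ that the paper leaves implicit. Nothing is missing.
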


We finish this section with a result on supersolvable arrangements. For more about such arrangements,  
see~\cite[\S4.3]{s:aiha}.  In~\cite[Proposition 22]{hs:fcp}, it was shown that if $L$ is a geometric lattice with a  $\hat{0}$-$\hat{1}$ saturated chain that induces a partition of the atom set $(A_1,A_2,\dots, A_n)$, and the characteristic polynomial of $L$ factors as $\chi(L,t) = \prod_{i=1}^n (t-|A_i|)$, then $L$ is supersolvable. The converse of this statement was shown earlier by Stanley~\cite[Theorem 4.1]{sta:sl}.

\begin{prop} \label{supersolvable}
Let $G$ be a labeled $n$-multigraph.  If $G$ is perfectly labeled, then the lattice $L(G)=L(\AA(G))$ is supersolvable.
\end{prop}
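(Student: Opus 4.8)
The plan is to deduce supersolvability from the factorization of the characteristic polynomial over a saturated chain, using the criterion cited immediately before the proposition (\cite[Proposition 22]{hs:fcp}). So the goal is to produce a $\hat{0}$--$\hat{1}$ \emph{saturated} chain in $L(G)$ whose induced atom partition $(A_1,\dots,A_n)$ satisfies $\chi(L(G),t)=\prod_{i=1}^n(t-|A_i|)$.

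First I would take the chain $C:\hat{0}=V_0\leq V_1\leq\cdots\leq V_n=\hat{1}$ used in the proof of Theorem~\ref{ISFandChi}, where $V_m$ is the intersection of all hyperplanes $x_j=0$ and $x_i=\alpha x_j$ with $i<j\leq m$. Since $G$ is perfectly labeled, equation~\eqref{hs-factor-chi} gives $\chi(L,t)=t^{\rho(L)-n}\prod_{i=1}^n(t-|A_i|)$, and moreover $|A_i|=|E_i(G)|$. The one subtlety is that $C$ is only an $\hat{0}$--$\hat{1}$ \emph{multichain}, not a saturated chain: some $V_{m-1}=V_m$ (equivalently $E_m(G)=\emptyset$, equivalently $A_m=\emptyset$), and conversely a single step $V_{m-1}\lessdot V_m$ can drop rank by more than one is impossible here since each step only imposes hyperplanes involving the new coordinate $x_m$, so $\rho(V_m)-\rho(V_{m-1})\in\{0,1\}$. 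Thus after deleting the repeated entries, $C$ becomes a genuine saturated chain from $\hat{0}$ to $\hat{1}$ of length $\rho(L)$, and the nonempty blocks among the $A_i$ are exactly the blocks induced by this saturated chain. Re-indexing, write these nonempty blocks as $A'_1,\dots,A'_{\rho(L)}$; then $\chi(L,t)=\prod_{i:A_i\neq\emptyset}(t-|A_i|)=\prod_{j=1}^{\rho(L)}(t-|A'_j|)$, where I have absorbed the factor $t^{\rho(L)-n}$ into the empty blocks $A_i=\emptyset$ which contribute a factor $(t-0)=t$ each, and there are exactly $n-\rho(L)$ of them.

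With the saturated chain and the factorization $\chi(L,t)=\prod_{j=1}^{\rho(L)}(t-|A'_j|)$ in hand, I apply \cite[Proposition 22]{hs:fcp} directly to conclude that $L(G)$ is supersolvable. That reference is stated for a geometric lattice, and $L(\AA(G))$ is a geometric lattice (it is the intersection lattice of a hyperplane arrangement), so the hypotheses are met.

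I do not anticipate a serious obstacle; the only thing requiring a little care is the bookkeeping that passes from the multichain $C$ to a genuine saturated chain, i.e., verifying that each step of $C$ drops the rank by at most one (so that deleting repeats yields a saturated chain) and that the empty blocks correspond precisely to the power of $t$ separating $\deg\chi=\rho(L)$ from $n$. Both facts are immediate from the structure of the $V_m$: imposing the hyperplanes $x_m=0$ and $x_i=\alpha x_m$ for $i<m$ can cut down $V_{m-1}$ by at most the single dimension corresponding to $x_m$. Everything else is a citation of results already established in the excerpt.
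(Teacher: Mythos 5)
Your overall architecture matches the paper's proof: take the multichain $C:\hat 0 = V_0\le V_1\le\cdots\le V_n=\hat 1$, remove repeats to obtain a saturated chain $C'$, combine with the factorization from Theorem~\ref{ISFandChi} (and Theorem~\ref{HS1}), and cite \cite[Proposition~22]{hs:fcp}. However, there is a genuine gap at exactly the step you wave off as ``immediate.''

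You claim that $\rho(V_m)-\rho(V_{m-1})\in\{0,1\}$ because ``each step only imposes hyperplanes involving the new coordinate $x_m$, so imposing them can cut down $V_{m-1}$ by at most the single dimension corresponding to $x_m$.'' This is false in general. Consider, for instance, a multigraph with two edges $12^{\alpha}$ and $12^{\beta}$, $\alpha\ne\beta$, and no edge $01$: then $V_1=\Cc^n$, and $V_2$ is cut out by $x_1=\alpha x_2$ and $x_1=\beta x_2$, which together force $x_1=x_2=0$, a codimension-$2$ drop. The hyperplanes in $E_m$ all ``involve $x_m$,'' but each of the form $x_i=\alpha x_m$ constrains a different pair of coordinates, and several of them can easily be jointly independent over $V_{m-1}$. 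The fact that the rank drop is at most one is precisely where the perfect-labeling hypothesis does its work, and it is not automatic. The paper's proof establishes this with the matrix argument: choosing $j$ maximal with $x_j=\beta x_{k+1}$ added, one shows every other new normal vector $r(x_i=\alpha x_{k+1})$ (and $r(x_{k+1}=0)$, etc.) lies in the row span of $M_k$ together with $r(x_j=\beta x_{k+1})$, using each of the three conditions of Definition~\ref{perfect-labeling} to exhibit the needed row of $M_k$. Your bookkeeping for empty versus nonempty blocks is fine once saturation is known, but it is circular without it, since ``there are exactly $n-\rho(L)$ empty blocks'' is equivalent to the chain being saturated. To repair the argument you must supply this linear-algebra step explicitly, invoking the perfect-labeling conditions.
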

\begin{proof}

Consider the multichain $\hat{0}=V_0\leq V_1\leq\cdots \leq V_n=\hat{1}$ defined in the proof of Theorem~\ref{ISFandChi}.   Let $C'$ be the chain obtained from $C$ by removing any  repeated elements in $C$. We claim that $C'$ is saturated.  Indeed, suppose $V_k<V_{k+1}$.  For $\ell\ge1$, let $M_\ell$ be the matrix whose rows are  normal vectors to the hyperplanes defining $V_\ell$, where for a hyperplane of the form $x_i=\alpha x_j$, $i<j$, we take the normal with a one in coordinate $i$ and similarly for $x_i=0$.  We denote these row vectors by $r(x_i=\alpha x_j)$ and $r(x_i=0)$, respectively.    Since $V_\ell$ is the nullspace of $M_\ell$, it suffices to show that
\beq
\label{rk}
\rk M_{k+1} = 1+\rk M_k
\eeq
where $\rk$ denotes matrix rank.  

Equation~\ree{rk}  is clearly true if $M_{k+1}$ has only one more row than $M_k$.  So suppose at least two hyperplanes were added; then one of them must be of the form $x_j = \beta x_{k+1}$.  Consider such a hyperplane where $j$ is maximum.  It is obvious that $\rk M' = 1+ \rk M_k$ if $M'$ is the matrix obtained by adding $r(x_j = \beta x_{k+1})$ to $M_k$.  So we will be done if we can show that the other rows of $M_{k+1}$ involving $x_{k+1}$ are linear combinations of the rows of $M'$.  But this follows from the fact that $G$ is perfectly labeled.    For example, consider a row corresponding to a hyperplane
$x_i = \alpha x_{k+1}$ with $i<j$.  By condition (1) of Definition~\ref{perfect-labeling}, the matrix $M_k$ contains the row $r(x_i = \alpha/\beta x_j)$, and
$$
r(x_i = \alpha x_{k+1}) = (\alpha/\beta) r(x_j=\beta x_{k+1}) + r(x_i = (\alpha/\beta) x_j).
$$

Using Theorems~\ref{HS1} and~\ref{ISFandChi} we see that $\chi(L(G),t)$ factors with nonpositive integer roots.  By the previous paragraph, this factorization is induced by the saturated chain $C'$.  It follows from~\cite[Proposition~22]{hs:fcp} that $L(G)$ is supersolvable.
\end{proof}

If $G$ is a simple graph, 0 is an isolated vertex and all edges are labeled by 1, then a perfect labeling of~$G$ is just a PEO and the converse of Proposition~\ref{supersolvable} holds; see, e.g.,~\cite[Corollary 4.10]{s:aiha}.  However, the converse is false for multigraphs.  For example, the multigraph shown in Figure~\ref{supersolvable-no-perfect} has no perfect labeling, since condition~(2) of Definition~\ref{perfect-labeling} must fail, but on the other hand $L(G)$ is easily seen to be supersolvable.

\begin{figure}
\begin{center}
\begin{tikzpicture}
\foreach \v in {(0,0),(2,0),(4,0)} \fill \v circle(.1);
\node at (0,-.4) {0};
\node at (2,-.4) {1};
\node at (4,-.4) {2};
\draw (2,0) to[bend left] (4,0) to[bend left] (2,0);
\draw(3, .6) node {$\alpha$};
\draw(3, -.6) node {$\beta$};
\end{tikzpicture}
\end{center}
\caption{A supersolvable multigraph with no perfect labeling.\label{supersolvable-no-perfect}}
\end{figure}
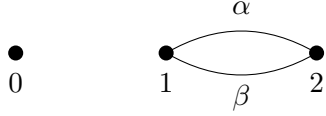


\section{Forests and pattern avoidance} \label{inv}

In this section, we study \emph{tight forests}, another class of labeled forests characterized by avoiding certain permutation patterns.  If $G$ is triangle-free, then every tight spanning forest is an NBC~set, and the converse is true if the vertex labeling is a \emph{quasi-perfect ordering} (QPO).  These orderings exist only for certain bipartite graphs; when they do, they give a combinatorial interpretation of the chromatic polynomial as a generating function for tight spanning forests.

Two sequences of distinct positive integers $\pi =\pi_1\pi_2\cdots \pi_k$ and $\sigma= \sigma_1\sigma_2\cdots \sigma_k$ are \emph{order-isomorphic}, written $\pi\sim\sigma$, provided that $\pi_i<\pi_j$ if and only if $\sigma_i<\sigma_j$ for all $i$ and $j$.  For example, $2341\sim6892$.  If $\pi$ and $\sigma$ are two sequences of  distinct positive integers we say \emph{$\si$ contains $\pi$ as a pattern} if $\si$ has a  subsequence that is order-isomorphic to $\pi$.  For example, $\si = 6892$ contains $231$ as a pattern because of the subsequence $692\sim231$.   We say that \emph{$\si$ avoids $\pi$} if $\si$ does not contain $\pi$ as a pattern.  For example, 6892 avoids 321.  More generally, given a set of sequences  $\Pi$ we say that \emph{$\si$ avoids $\Pi$} if $\si$ avoids every 
$\pi\in \Pi$.

Now let $T$ be a tree with vertices labeled by distinct positive integers.  As before, we regard $T$ as rooted at its smallest vertex $r$.  We say that $T$ \emph{avoids} $\pi$ if every path from $r$ to a leaf of~$T$ avoids $\pi$.  For example, if $T$ is the right-hand tree in Figure~\ref{T}, then the paths from the root are $235, 239, 26, 278,  274$, so $T$ avoids 321 but not 21 because $74\sim21$.  Indeed, a tree is increasing precisely if it avoids 21.  We say  that a labeled forest avoids $\pi$ if every component tree in it does so.  We similarly extend all definitions of pattern avoidance in sequences to forests.

For the remainder of the section, we will specifically consider labeled forests avoiding the set $\Pi=\{231,312,321\}$.  To simplify discussion, we will call a copy of one of these permutations in a tree a \emph{bad triple}.  Note that a bad triple can never contain the root of a tree, since none of the forbidden patterns starts with the digit~1.  As shown in \cite[Prop.~6.2]{DDJSS}, the permutations avoiding these patterns are precisely the involutions such that all two-cycles are of the form $(i,i+1)$, and in particular are counted by the Fibonacci numbers.

\begin{defn}\label{def:tight}
A permutation is \emph{tight} if it avoids the patterns $\Pi=\{231, 312, 321\}$.  Equivalently, all the 1's in its permutation matrix lie on or adjacent to the main diagonal.  A sequence of distinct integers is tight if it is order-isomorphic to a tight permutation.  Finally, if $F$ is a labeled forest, with every component tree rooted at its smallest vertex, then we saw that $F$ is tight if every path starting at a root is a tight sequence.
\end{defn}

It is worth noting that every tree with two or fewer edges is tight.
As before, we use the notation
\begin{equation} \label{TF-notation}
\begin{aligned}
\cTF(G)&=\text{set of tight spanning forests of $G$}, & \tf(G) &= |\cTF(G)|,\\
\cTF_m(G)&=\text{set of tight spanning forests with $m$ edges}, & \tf_m(G) &= |\cTF_m(G)|,\\
\TF(G) &= \TF(G,t)=\sum_{m\ge0} \tf_m(G) t^{n-m}.
\end{aligned}
\end{equation}

\begin{lem}\label{subTreeAvoidLem}
Every subforest of a tight labeled forest is tight.
\end{lem}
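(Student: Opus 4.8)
The plan is to reduce the claim about forests to a claim about individual trees, and then to a claim about root-to-leaf paths, exploiting that each notion of avoidance is defined componentwise and then pathwise. First I would observe that it suffices to treat a single tree $T$: a subforest $F'$ of a tight forest $F$ has each component contained in some component of $F$, so if the tree case is known, each component of $F'$ is tight and hence so is $F'$. So let $T$ be a tight tree, rooted at its minimum vertex $r$, and let $T'\subseteq T$ be a subtree. The subtlety is that $T'$ need not contain $r$, and even if it does, a leaf of $T'$ need not be a leaf of $T$; so I cannot directly quote ``every root-to-leaf path of $T'$ is a root-to-leaf path of $T$.'' Instead I would use the fact that a path in $T'$ is a path in $T$, together with the key reduction: a tree is tight if and only if every path \emph{from its root} is tight, and a path from the root of $T'$ to any vertex extends (inside $T$) to a path from $r$.

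The heart of the argument is therefore the following two observations about sequences. (i) Any (contiguous) subsequence of a tight sequence is tight — this is immediate from Definition~\ref{def:tight}, since containing one of the patterns $231,312,321$ is inherited downward: if a subsequence of $s'$ is order-isomorphic to a bad pattern, then so is the corresponding subsequence of the longer sequence $s'$ sits inside. Actually what I need is slightly more: (ii) if $r'$ is the root of $T'$, then the path in $T$ from $r$ to $r'$ followed by any path in $T'$ from $r'$ to a vertex $v$ is a path in $T$ starting at $r$, hence tight; and deleting the initial segment from $r$ down to (but not including) $r'$ leaves a \emph{suffix}, which is again tight by (i). Thus every path in $T'$ starting at $r'$ is tight. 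It remains only to note that $r'$ is indeed the minimum vertex of $T'$: since $T$ is increasing-free in the weak sense that... — wait, $T$ need not be increasing — so I should argue instead that $T'$ is rooted at \emph{its own} minimum vertex by definition of what ``tight forest'' means for $T'$, and that the minimum vertex $r'$ of $T'$ lies on the path from $r$ to every other vertex of $T'$ only if $r'$ is the vertex of $T'$ closest to $r$; these two vertices may differ.

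This last point is the only real obstacle, so let me address it head-on. Let $w$ be the vertex of $T'$ nearest to $r$ in $T$ (it is unique since $T$ is a tree), and let $r'=\min V(T')$. For any $v\in V(T')$, the $r$–$v$ path in $T$ passes through $w$ and its portion from $w$ to $v$ lies entirely in $T'$ (as $T'$ is connected and $w$ separates $r$ from $T'$); hence the $w$–$v$ path in $T'$ is a suffix of a root-path of $T$, so is tight by (i). In particular every path in $T'$ emanating from $w$ is tight. Now I claim a rooted tree all of whose paths from a \emph{fixed} vertex $w$ are tight has all paths from its minimum vertex tight as well: a path from $r'$ to $v$ is either a suffix of the $w$–$v'$ path (when $r'$ lies between $w$ and $v$ on that path) or its reversal-concatenation structure must be analyzed — this is where I would invoke that tightness of \emph{all} $w$-paths forces, via the pattern set $\{231,312,321\}$ being closed under the relevant operations, tightness of the $r'$-paths; concretely, since $T'$ is a subforest of the tight forest $T$, and tightness for $T'$ is \emph{defined} relative to its root $r'=\min V(T')$, I would verify directly that each $r'$–$v$ path in $T'$, being a path in $T$, is a subpath of some root-to-leaf path of $T$ and hence tight by (i). So in fact the cleanest route is: \textbf{every} path in $T'$ (from any vertex to any vertex) is a path in $T$, and is contained in some root-to-leaf path of $T$, which is tight; by (i) its every suffix — in particular every path starting at $r'$ — is tight; therefore $T'$ is tight, completing the proof.
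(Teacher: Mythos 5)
There is a genuine gap, centered on your repeated claim that every path in $T'$ starting at $r' = \min V(T')$ is a subpath (or suffix) of a root-to-leaf path of $T$. This is false, and the counterexample is exactly the configuration your proof never resolves, namely when $r' \neq s$, where $s$ is the vertex of $T'$ nearest $r$ in $T$. Consider the tree $T$ that is a path on vertices $1,3,2,4$ (in that order), rooted at $1$; one checks $T$ is tight. Let $T'$ be the subpath on $3,2,4$, so $r' = 2$ and $s = 3$. The path in $T'$ from $r'$ to vertex $3$ is the sequence $2,3$, which does not appear as a contiguous subsequence of the unique root-to-leaf sequence $1,3,2,4$; the vertices occur in the wrong order. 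Similarly, your attempted concatenation in (ii), namely $r,\dots,r'$ followed by $r',\dots,v$, is not a path in $T$ when the second piece returns through $s$: here concatenating $1,3,2$ with $2,3$ gives $1,3,2,3$, which repeats vertex $3$. Your penultimate paragraph acknowledges this obstacle (``these two vertices may differ'') but then pivots back to the same false assertion.

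The missing ingredient is a structural fact that cannot come for free from subsequence-closure: you must first prove, \emph{using the tightness of $T$}, that $r'$ and $s$ are equal or adjacent. (Sketch: the $r$-to-$r'$ path in $T$ is tight, its terminal segment from $s$ to $r'$ lies inside $T'$, and each of its intermediate vertices exceeds $r'$; an intermediate vertex would therefore produce a $231$ or $321$ pattern ending at $r'$.) With this in hand, a path $P'$ in $T'$ from $r'$ either avoids $s$, in which case it is a genuine subpath of a root-to-leaf path of $T$ and your subsequence argument does apply, or it has the form $r', s, P''$; in the latter case $s, P''$ is a subpath of a root-to-leaf path of $T$ and hence tight, and prepending the new global minimum $r'$ preserves tightness because none of $231$, $312$, $321$ has its smallest entry in position one. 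Without the adjacency lemma the prepending step would not reduce matters to a single edge, and without the prepending observation the adjacency lemma alone does not finish the job; your write-up contains neither.
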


\begin{proof}
It suffices to prove the corresponding statement for labeled trees.  Accordingly, let $T$ be a labeled tree with root $r$, and let $T'$ be a subtree of $T$.  Let $r'$ be the root of $T'$, and let $s$ be the vertex of $T'$ which is closest to $r$ in $T$, so that $P:r,\dots,s,\dots,r'$ is a path in $T$.  Since $r'$ has the smallest label of any vertex in $T'$, it follows that $r'$ and $s$ are either identical or adjacent; otherwise the path $s,\dots,r'$   would contain the pattern $321$ or $231$, which would contradict the fact that $P$ is tight.

Now let $P'$ be a path in $T'$ starting at $r'$.  By the previous paragraph, $P'$ is either a subpath of some path from $r$ in $T$, or else it has the form $r',s,P''$, where $s$ is the parent of $r'$ in $T$ and $s,P''$ is a subpath of a path from $r$ in $T$.  Then $s,P''$ is tight because $T$ is tight, and $r',s,P''$ is tight as well, again by Definition~\ref{def:tight}, since the label of $r'$ is smaller than that of any other vertex in this path.
\end{proof}

In Theorem~\ref{isf-and-nbc}, we showed that every increasing forest is an NBC~set.  We wish to extend this result to the setting of tight forests.  It is not the case that every tight forest is an NBC~set: for example, in the cycle with vertex set $[3]$ the three-vertex path labeled $1, 3, 2$  is a tight forest but is itself a broken circuit.  On the other hand, the next result shows that broken 3-cycles are the only obstruction to extending Theorem~\ref{isf-and-nbc}.

\begin{prop}\label{avoidIsNBCProp}
Let $G$ be a graph with no  3-cycles. Then $\cTF_k(G) \subseteq \cNBC_k(G)$ for all $k$.
\end{prop}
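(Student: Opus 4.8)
The plan is to argue by contradiction along the lines of the proof of~\eqref{ISFinNBC} in Theorem~\ref{isf-and-nbc}, but dealing with the extra cases allowed by tight (as opposed to increasing) sequences. Suppose $F$ is a tight spanning forest of $G$ that contains a broken circuit $B$. Since $G$ has no $3$-cycles, every cycle of $G$ has length at least $4$, so $B=\brk(C)$ is a path $v_1,v_2,\dots,v_\ell$ with $\ell\ge3$ (i.e., at least $3$ vertices), where $v_1$ was an endpoint of the removed smallest edge, so $v_1=\min\{v_1,\dots,v_\ell\}$ in the cyclic sense; more precisely the removed edge is $v_1v_\ell$, it was the smallest edge of $C$, and since edges are ordered lexicographically with smallest vertex first, $v_1=\min\{v_1,v_2,v_\ell\}$ and $v_2>v_\ell$ (this is exactly the set-up extracted in the proof of Theorem~\ref{isf-and-nbc}). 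The key point is that because $B$ lies in the forest $F$, every path in $B$ is a subpath of a path from some root, hence is tight by Lemma~\ref{subTreeAvoidLem}.

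The heart of the argument is a purely combinatorial claim about sequences: \emph{a path $v_1,\dots,v_\ell$ with $\ell\ge 3$, $v_1=\min\{v_1,v_2,v_\ell\}$, and $v_2>v_\ell$ cannot be tight, provided no three consecutive vertices form a triangle in $G$.} First I would handle the case $\ell=3$: here the path is $v_1,v_2,v_3$ with $v_1<v_3<v_2$, which is the pattern $132$. But $\{v_1v_2,v_2v_3\}\subseteq F\subseteq E(G)$, and if $v_1v_3\in E(G)$ then $v_1,v_2,v_3$ would form a $3$-cycle, contradicting the hypothesis on $G$; since $132$ is not one of the forbidden patterns, the length-$3$ case shows the broken $3$-cycle really is the only obstruction, and excluding it is exactly what ``no $3$-cycles'' buys us. For $\ell\ge 4$, I would locate the first descent: let $p\ge 2$ be minimal with $v_p>v_{p+1}$ (this exists since $v_2>v_\ell$ forces a descent somewhere in $v_2,\dots,v_\ell$, and $p\ge 2$ because $v_1<v_2$). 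Then $v_{p-1}<v_p>v_{p+1}$. Now consider $v_{p-1},v_p,v_{p+1}$: if $v_{p-1}<v_{p+1}$ this is a $132$, but then (since these are three consecutive vertices of a path in $F\subseteq G$) $v_{p-1}v_{p+1}\notin E(G)$, so $132$ alone is not yet a contradiction — I need a fourth vertex. If $p\ge 3$, look at $v_{p-2},v_{p-1},v_p,v_{p+1}$: we have $v_{p-2}<v_{p-1}<v_p$ (using minimality of $p$, so no descent before $p$), so $v_{p-2}v_{p-1}v_p$ is increasing, and then $v_{p-2},v_p,v_{p+1}$ together with $v_{p-1}$ produces one of $231,312,321$ depending on the relative order of $v_{p+1}$ with $v_{p-2},v_{p-1}$. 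If instead $p=2$ and $\ell\ge 4$, look forward: $v_1,v_2,v_3,v_4$ with $v_1<v_2>v_3$, and analyze the position of $v_4$ relative to the rest, again forcing one of $231,312,321$ among some three of $v_1,v_2,v_3,v_4$ that does \emph{not} rely on an edge forming a triangle.

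I expect the main obstacle to be the case bookkeeping in the length-$\ge 4$ analysis: one must check that in each sub-case the forbidden pattern $231$, $312$, or $321$ can be exhibited using a triple of vertices whose being a forbidden pattern is genuinely incompatible with tightness — i.e., one must avoid ``cheating'' by invoking a would-be edge $v_av_b$ that is not actually present in $G$, since only $132$ (the broken $3$-cycle) is permitted and only then because the closing edge is absent by the no-triangle hypothesis. The cleanest way to organize this is probably to prove the sequence claim as a standalone sublemma: \emph{if a sequence $w_1,\dots,w_\ell$ of distinct integers with $\ell\ge 3$ has $w_1=\min\{w_1,w_2,w_\ell\}$ and $w_2>w_\ell$, then $w_1,\dots,w_\ell$ contains a pattern from $\{132,231,312,321\}$ using only three of its entries}, and then invoke the no-$3$-cycle hypothesis of $G$ solely to rule out the $132$ case (when the $132$ occupies three consecutive path vertices, which one should check can always be arranged). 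After the sublemma, the proposition follows immediately: $B$ would be a tight path (by Lemma~\ref{subTreeAvoidLem}) that nevertheless contains a forbidden pattern or a triangle, a contradiction, so no tight spanning forest contains a broken circuit, giving $\cTF_k(G)\subseteq\cNBC_k(G)$ for all $k$.
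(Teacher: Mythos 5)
Your overall reduction is right --- by Lemma~\ref{subTreeAvoidLem} it suffices to show that a broken circuit of length $\ge 4$ (as a labeled path) is not tight --- and your identification of the setup ($v_1=\min$, $v_2>v_\ell$) matches the paper. But the ``first descent'' case analysis has a real gap, and the worry about ``cheating'' with non-edges reflects a misunderstanding that is steering you toward a harder road.

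The concrete gap: in your $p\ge 3$ branch you claim that looking at $v_{p-2},v_{p-1},v_p,v_{p+1}$ ``produces one of $231,312,321$.'' That fails when $v_{p-1}<v_{p+1}<v_p$. In that sub-case, with $v_{p-2}<v_{p-1}<v_p$ and $v_{p-1}<v_{p+1}<v_p$, every triple drawn from these four consecutive vertices is either increasing or a $132$; none is a bad triple. (Example: $B=1,3,5,4,2$ has first descent at $p=3$ and $v_{p-1},v_p,v_{p+1}=3,5,4\sim 132$.) A similar incomplete sub-case lurks in your $p=2$ branch when $v_4>v_2$. So the sublemma you propose --- which allows $132$ as one of the producible patterns and then tries to ``rule out the $132$ case'' via the no-triangle hypothesis --- cannot be closed this way: a $132$ occurring as a subsequence of a longer broken circuit is perfectly consistent with tightness, since $132$ is not a forbidden pattern, and the no-$3$-cycle hypothesis tells you nothing about it. The only place $132$ matters is when it \emph{is} the entire broken circuit, i.e.\ $\ell=3$, and there the hypothesis simply guarantees $\ell\ge 4$; it does not license you to ignore $132$'s appearing elsewhere.

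The paper avoids all of this by not looking at consecutive triples or the first descent. Since no-$3$-cycles forces the broken circuit to have at least $4$ vertices $a,x,y,\dots,b$ with $a=\min$ and $b<x$, the paper simply takes the triple $(x,y,b)$ at positions $2,3,\ell$. The constraint $b<x$ then makes the three possible relative orders of $x,y,b$ exactly $231$ (if $b<x<y$), $321$ (if $b<y<x$), and $312$ (if $y<b<x$); $132$ can never arise because $b$ is always the smallest of the three. No case bookkeeping, and no edges of $G$ outside the broken circuit are invoked --- pattern avoidance is purely about the label sequence along the path from the root, so there is no ``cheating'' to avoid in the first place.
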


\begin{proof}
By Lemma~\ref{subTreeAvoidLem}, it is equivalent to show that any broken circuit $B$  of length 4 or more, regarded as a labeled tree contains a bad triple.  Now $B$  is a path of the form $a,x,y,\dots,b$ where $a=\min(B)$ and $b<x$.  Then either $b<x<y$, $b<y<x$, or $y<b<x$.  In each case $x,y,b$ is a bad triple, being a copy of  $231$, $321$, or $312$ respectively.
\end{proof}

\begin{prop}\label{3cycleNBCneq}
If $G$ has a 3-cycle, then $\cTF_2(G)\subsetneq\cNBC_2(G)$ and so $\TF(G,t)\neq (-1)^{|V(G)|}P(G,-t)$.  
\end{prop}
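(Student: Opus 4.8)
The plan is to exhibit, whenever $G$ has a $3$-cycle, an element of $\cNBC_2(G)$ that is not tight, while simultaneously checking that $\cTF_2(G)\subseteq\cNBC_2(G)$ so that the inclusion is genuinely strict. The second containment is immediate: a $2$-edge set is always a forest (it cannot contain a broken circuit of length $\ge 2$ unless it is a broken $3$-cycle), so I need only rule out that a tight $2$-edge forest is a broken $3$-cycle — but a broken $3$-cycle has length exactly $2$ as an edge set and, viewed as a path $a,x,b$ with $a=\min$, we have $b<x$, i.e.\ the path is order-isomorphic to $132$; since tightness requires avoiding $\{231,312,321\}$ and $132$ is a length-$3$ sequence that... wait, $132$ avoids all three patterns, so this needs care. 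Actually a $2$-edge set corresponds to a path on $3$ vertices, and \emph{every} $3$-vertex path is tight (a length-$2$ sequence — here I mean the pattern has length $3$ but every permutation of length $\le 3$ other than... no: $231,312,321$ all have length $3$). So I must instead note that $\cTF_2(G)\subseteq\cNBC_2(G)$ fails in general and the correct reading is that $\cTF_2$ and $\cNBC_2$ both consist of $3$-vertex paths, and among those the broken $3$-cycles are tight, so in fact I should produce the \emph{reverse} containment failure. Let me restate.

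\textbf{Revised plan.} The key point is that a broken $3$-cycle, as a $2$-edge path $a,x,b$ with $a<b<x$ (pattern $132$), \emph{is} tight (since $132\notin\{231,312,321\}$) and \emph{is} an NBC set of length $2$ if and only if it is not itself a broken circuit — but a broken $3$-cycle \emph{is} a broken circuit, hence \emph{not} in $\cNBC_2$. Meanwhile it \emph{is} in $\cTF_2$. Therefore $\cTF_2(G)\not\subseteq\cNBC_2(G)$, which contradicts what is written. I now suspect the intended statement, consistent with Proposition~\ref{avoidIsNBCProp} (which asserts $\cTF_k\subseteq\cNBC_k$ only when $G$ is triangle-free), is that when $G$ \emph{has} a $3$-cycle the inclusion $\cTF_2\subseteq\cNBC_2$ \emph{fails}, and the displayed symbol $\subsetneq$ is a typo for $\not\subseteq$. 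So the proof I would write: let $abc$ be a $3$-cycle of $G$ with $a<b<c$; the path $b-c-?$... rather, order the edges of $G$ lexicographically and consider the $3$-cycle $C$ on vertices $a<b<c$ with edges $ab<ac<bc$; its broken circuit is $\brk(C)=\{ac,bc\}$, which as a labeled subforest is the path $a,c,b$, order-isomorphic to $132$, hence tight by Definition~\ref{def:tight}; thus $\{ac,bc\}\in\cTF_2(G)$. But $\{ac,bc\}=\brk(C)$ is a broken circuit, so $\{ac,bc\}\notin\cNBC_2(G)$. Hence $\cTF_2(G)\not\subseteq\cNBC_2(G)$.

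From this, deduce the chromatic-polynomial consequence exactly as in the ISF case: by Whitney's formula (Theorem~\ref{Whitney}), $(-1)^{|V(G)|}P(G,-t)=\sum_m\nbc_m(G)t^{|V(G)|-m}$, so if we had $\TF(G,t)=(-1)^{|V(G)|}P(G,-t)$ then $\tf_m(G)=\nbc_m(G)$ for all $m$; in particular $\tf_2(G)=\nbc_2(G)$. But $\cTF_2(G)$ and $\cNBC_2(G)$ are both subsets of the set of $2$-edge subgraphs of $G$ that are $3$-vertex paths — more precisely, $\cNBC_2(G)$ consists of those $2$-edge paths that are not broken $3$-cycles, whereas $\cTF_2(G)$ consists of \emph{all} $2$-edge paths (every $3$-vertex path is tight). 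Since $G$ has a $3$-cycle, there is at least one broken $3$-cycle, so $|\cTF_2(G)|>|\cNBC_2(G)|$, contradicting $\tf_2(G)=\nbc_2(G)$. Hence $\TF(G,t)\neq(-1)^{|V(G)|}P(G,-t)$.

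\textbf{Main obstacle.} The only subtlety — and the step I would be most careful about — is the claim that $\cTF_2(G)$ is exactly the set of $2$-edge paths of $G$ and $\cNBC_2(G)$ is that set minus the broken $3$-cycles; both facts follow from the observation (already noted in the excerpt, ``every tree with two or fewer edges is tight'') that every $3$-vertex labeled path avoids $\{231,312,321\}$, together with the elementary fact that a $2$-edge acyclic subgraph is a path and the only $2$-edge broken circuits are broken $3$-cycles. Once these are in hand the counting inequality $\tf_2(G)=|\cNBC_2(G)|+(\#\ 3\text{-cycles of }G)>\nbc_2(G)$ is immediate, and the Whitney comparison finishes the argument. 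If instead the displayed $\subsetneq$ is to be taken literally as written, the statement is false as I noted, so I am proceeding on the assumption that $\not\subseteq$ is intended and that the first displayed inclusion should read $\cNBC_2(G)\subsetneq\cTF_2(G)$.
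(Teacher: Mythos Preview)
Your diagnosis is right, and it matches the paper's own proof. The statement as printed has the inclusion reversed: the paper's proof explicitly says it will exhibit ``an element of $\cTF_2(G)$ not in $\cNBC_2(G)$,'' pointing to the same example you construct (the path $1,3,2$ in a $3$-cycle, i.e.\ the broken circuit $\{ac,bc\}$ for $a<b<c$, which is tight since $132\notin\{231,312,321\}$). So the intended assertion is $\cNBC_2(G)\subsetneq\cTF_2(G)$ (equivalently $\cTF_2(G)\not\subseteq\cNBC_2(G)$), exactly as you surmised, and your argument is the same as the paper's, only more fully spelled out.

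One small wording slip: $\cTF_2(G)$ and $\cNBC_2(G)$ are not subsets of the $3$-vertex paths---both also contain all $2$-edge matchings---but since the matchings lie in both sets this does not affect your counting inequality $\tf_2(G)>\nbc_2(G)$, and the Whitney-formula deduction of $\TF(G,t)\neq(-1)^{|V(G)|}P(G,-t)$ goes through.
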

\begin{proof}
Let $G$ have a $3$-cycle $C$.  So $C$ is order isomorphic to the cycle on $[3]$, and the example before the previous proposition indicates how to find an element of $\cTF_2(G)$ not in $\cNBC_2(G)$.
\end{proof}

We now define a kind of vertex ordering that is related to tight forests as PEOs are to ISFs.

\begin{defn} \label{def:QPO}
Let $G$ be a graph.  A \emph{candidate path} in $G$ is a path of the form
\begin{equation} \label{candidate-path}
a,\ c,\ b,\ v_1,\ \dots,\ v_m=d
\end{equation}
such that $a<b<c$; $m\geq 1$; and $v_m$ is the only $v_i$ smaller than $c$. The total ordering of $V(G)$ is called a \emph{quasi-perfect ordering} (QPO) if all candidate paths satisfy the following condition:
\begin{equation} \label{QPO-fail} 
\text{either } ad\in E(G), \text{ or else } d<b \text{ and } cd\in E(G).
\end{equation}
\end{defn}

This definition seems obscure, but in fact QPOs are an extension of PEOs in the following sense.  If the possibility $m=0$ were allowed, so that $d=b$, then~\ree{QPO-fail} would reduce to the requirement that $ab\in E(G)$, just as in a PEO.  The existence of a QPO has strong structural consequences, as we now explain.

\begin{prop}\label{chord}
Let $G$ be a graph with a QPO.
\begin{enumerate}
\item[(1)] Every cycle of length at least $5$ has a chord.
\item[(2)] If $G$  has no $3$-cycles, then it is bipartite.
\end{enumerate}
\end{prop}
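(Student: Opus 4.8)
The plan is to derive both structural consequences from a single observation: if $G$ has a QPO and $C$ is a cycle in $G$ of length $\ge 4$, then by suitably orienting and starting $C$ we can exhibit a candidate path using a long arc of $C$, and the QPO condition will then force a chord. Concretely, let $C=w_0w_1\cdots w_{\ell-1}w_0$ be a cycle with $\ell\ge 4$, and let $a$ be its minimum vertex, say $a=w_0$ after rotating indices. Look at the two neighbors $w_1$ and $w_{\ell-1}$ of $a$ on $C$; relabel so that the neighbor we will call $c$ is the larger of the two options we need, and walk around $C$ the long way. The first step is to set up notation so that reading $C$ starting from $a$ in one of the two directions gives a sequence $a, c, b, v_1,\dots, v_m$ where $c$ is the first vertex after $a$, $b$ is the second, and then we continue until we return to a vertex smaller than $c$ — such a vertex exists since $a$ itself has that property, so $m$ is well-defined and $v_m$ is the first $v_i$ below $c$; moreover $a<b<c$ can be arranged because $a$ is the global minimum of $C$ and we get to choose which of $a$'s two neighbors plays the role of $c$ versus the role of the "far endpoint." The key point is that $v_m$ being the \emph{first} vertex below $c$ as we walk makes it the unique $v_i$ smaller than $c$, so this really is a candidate path in the sense of Definition~\ref{def:QPO}.

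The second step is to apply \eqref{QPO-fail}: either $a v_m\in E(G)$, or $v_m<b$ and $c v_m\in E(G)$. In the first case $av_m$ is a chord of $C$ provided $v_m$ is not a neighbor of $a$ on $C$; in the second case $cv_m$ is a chord provided $v_m$ is not $b$ (it isn't, since $v_m<b$) and not the other neighbor of $c$ on $C$. So the third step is a case check verifying that, when $\ell\ge 5$, we can always route the candidate path so that the forced edge is a genuine chord rather than an edge already on $C$. This is where the length hypothesis $\ell\ge 5$ really enters: with $\ell=4$ the cycle is too short and the "chord" the QPO hands us might coincide with an edge of the $4$-cycle, which is why part~(1) is only claimed for length $\ge 5$. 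I expect this case analysis — keeping straight which vertices are neighbors on $C$ and which direction to traverse — to be the main obstacle; it is essentially bookkeeping, but it must be done carefully, possibly splitting on whether $\ell=5$ or $\ell\ge 6$ and on the relative position of $v_m$.

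For part~(2), assume $G$ has no $3$-cycles and suppose for contradiction that $G$ is not bipartite, so $G$ contains an odd cycle; take $C$ to be a \emph{shortest} odd cycle. Since $G$ is triangle-free, $\ell=|C|\ge 5$, so by part~(1) $C$ has a chord. A chord of a cycle of odd length $\ell$ splits it into two shorter cycles whose lengths sum to $\ell+2$, hence one of them is odd and shorter than $\ell$ — but the odd piece could be a triangle only if $\ell=5$ and the chord connects vertices at distance $2$; since $G$ is triangle-free that is impossible, so the shorter odd cycle has length $\ge 5$ as well, still contradicting minimality of $C$ (and if the shorter odd cycle has length $3$ we contradict triangle-freeness directly). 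Either way we reach a contradiction, so $G$ is bipartite. This step should be routine given part~(1), modulo the small parity-of-the-two-pieces argument, which I would state as: a chord of an $\ell$-cycle yields cycles of lengths $p+1$ and $\ell-p+1$ for some $2\le p\le \ell-2$, and these have opposite parity to each other's complement, so exactly one is odd.
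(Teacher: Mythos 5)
Your part~(2) is fine and essentially the paper's argument: take a shortest odd cycle; by triangle-freeness it has length at least~$5$, so part~(1) gives a chord, which produces a strictly shorter odd cycle, a contradiction. The parity bookkeeping you describe is more than needed, since any chord of an odd cycle splits it into an odd piece and an even piece, and the odd piece is automatically shorter.

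The gap is in part~(1), specifically in how you set up the candidate path. You anchor it at $a=\min(C)$ and insist that $c$ be one of $a$'s two neighbours on $C$, asserting that ``$a<b<c$ can be arranged'' by choosing which neighbour plays the role of $c$. That is false in general. Take the cyclic order $1,2,5,4,3$: here $a=1$ has neighbours $2$ and $3$, and walking either way gives a second step larger than the first (the two reads are $1,2,5,\dots$ and $1,3,4,\dots$), so $b>c$ in both cases and neither direction yields a candidate path. A related problem is your claim that ``$m$ is well-defined'' because $a<c$: the vertex $a$ is the start of the path and cannot reappear among the $v_i$, so if $c$ is the second-smallest vertex of $C$ there may be no $v_i<c$ at all, and the construction never terminates with a valid $v_m$.

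The paper's proof avoids all of this by putting the maximum of $C$, not the minimum, in the privileged position. Argue the contrapositive: let $C$ be a chordless cycle of length $\ell\geq 5$, let $c=\max(C)$, let $a<b$ be its two neighbours on $C$, and let $d$ be the next vertex on $C$ after $b$. Then $a<b<c$ holds automatically, and $d<c$ holds automatically (every vertex of $C$ is below $c$), so $a,c,b,d$ is a candidate path with $m=1$, and $m=1$ is the only case you ever need. Since $\ell\geq 5$, the pairs $a,d$ and $c,d$ are non-consecutive on $C$; since $C$ is chordless, neither $ad$ nor $cd$ is an edge; so~\eqref{QPO-fail} fails and the labeling is not a QPO. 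This single choice, $c=\max(C)$ rather than $a=\min(C)$, dissolves the case analysis you flagged as the main obstacle and eliminates any need to consider longer candidate paths. Your intuition about the role of $\ell\geq 5$ is correct: when $\ell=4$ the vertex $d$ is the second neighbour of $a$, so $ad\in E(C)$ and~\eqref{QPO-fail} is satisfied trivially, which is exactly why the statement excludes $4$-cycles.
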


\begin{proof}
For (1), we prove the contrapositive.  Suppose that $C\subseteq G$ is a chordless cycle of length at least 5.  Let $P$ be the three-edge subpath $a,c,b,d$, where $c=\max(C)$ and $b>a$.  Neither $ad$ nor $cd$ are edges, so $P$ is a candidate path that fails~\eqref{QPO-fail}.

For (2), if $G$  is not bipartite, then it has an odd cycle  $C$.   If $C$ has minimum length, then it cannot be a cycle of length at least $5$ by part (a).  So $C$ must be a triangle, which is a contradiction.
\end{proof}

Before continuing with the general development, we will look at a few examples.

\begin{exa}
We note that not every graph with a QPO need be bipartite.
For example, the labeling of the non-bipartite graph shown in Figure~\ref{QPO:nonbipartite} is a QPO.  In particular, the only candidate path is $a,c,b,d=1,5,4,3$, and $ad$ is an edge.
\end{exa}
\begin{figure}
\begin{center}
\begin{tikzpicture}
\foreach \v in {(0,0),(2,0),(0,2),(2,2),(4,1)} \fill \v circle (.1);
\draw (2,2) -- (0,2) -- (0,0) -- (2,0) -- (2,2) -- (4,1) -- (2,0);
\node at (-.4,0) {5};
\node at (-.4,2) {4};
\node at (2.6,0) {1};
\node at (2.6,2) {3};
\node at (4.4,1) {2};
\end{tikzpicture}
\end{center}
\caption{A non-bipartite graph with a QPO\label{QPO:nonbipartite}}
\end{figure}
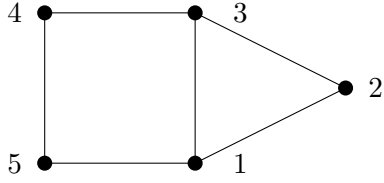

\begin{exa}
For every positive integer $m$, the complete bipartite graphs $K_{m,1}$, $K_{m,2}$ and $K_{m,3}$ all admit QPOs.  For $K_{m,1}$, every ordering is a QPO.  For $K_{m,2}$ one can label the partite sets as $\{1,m+2\}$ and $\{2,\dots,m+1\}$; this gives a QPO, since every candidate path comes from removing an edge from a $4$-cycle.  For $K_{m,3}$, label the partite sets $X=\{1,2,N\}$ and $Y=\{3,4,\dots,N-1\}$, where $N=m+3$.  We claim that this labeling is a QPO. To show this, let $P$ be a candidate path, labeled as in \eqref{candidate-path}. If $N\not\in V(P)$, then we are done by the $K_{m,2}$ case.  And if $P$ has odd length then $av\in E(G)$.  Now consider $N\in V(P)$ and $P$ of even length.  So $cv\in E(G)$.  If $a\in X$ then we must have $a=1$, $b=2$, and $v=N$ which is a contradiction.  If $a\in Y$ then this forces $c=N$ and $v=1$ or $2$.  So $v<b$ and~\ree{QPO-fail} is satisfied.
\end{exa}

\begin{exa}
Consider the complete bipartite graph $G=K_{m,n}$, where $m,n\geq 4$.  Note that every cycle in~$G$ of length $\geq5$ has a chord.  But we claim that no ordering of $V(G)$ is a QPO.  Suppose, towards a contradiction,  that~$G$ has a QPO.  Let $N=m+n$ and let $X,Y$ be the partite sets of~$G$.

First, suppose that $N-1,N$ belong to the same partite set, say~$X$.  Let $a<b<d$ be vertices in~$Y$.  Then
$a,\ c=N-1,\ b,\ N,\ d$
is a candidate path with $ad\not\in E(G)$ and $d>b$, so condition~\eqref{QPO-fail} fails.

Second, suppose that $N-1,N$ belong to different partite sets.  Let $X$ be the partite set containing~$N-2$ and let $a,b,d\in Y$ with $a<b<d<N-2$; note that three such vertices must exist because $|Y|\geq4$.  Let~$x$ be whichever of~$N-1,N$ belongs to $X$.  Then
$a,\ c=N-2,\ b,\ x,\ d$
is a candidate path leading to the same failure of~\ree{QPO-fail} as in the previous case.
\end{exa}

For graphs with no 3-cycles, having a QPO implies that the tight forests are precisely the NBC~sets.

\begin{prop} \label{NBCSetAvoidProp}
Let $G$ be a labeled graph with no 3-cycles.  If the vertex labeling is a QPO, then $\cTF(G)=\cNBC(G)$.
\end{prop}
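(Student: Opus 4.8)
The goal is the reverse containment $\cNBC(G)\subseteq\cTF(G)$, which together with Proposition~\ref{avoidIsNBCProp} yields equality. I will argue by contradiction: suppose $F\in\cNBC(G)\setminus\cTF(G)$. As an NBC~set $F$ is acyclic, hence a spanning forest, and since $F$ is not tight, some path of $F$ starting at a root contains a bad triple. Choose such a path $P: r=u_1,u_2,\dots,u_n$ of \emph{minimum} length; then its proper prefix $u_1,\dots,u_{n-1}$ is tight. Since $u_1=r$ is the least vertex of $P$ and none of $231,312,321$ begins with its smallest entry, one checks quickly that $n\ge 4$.

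The heart of the proof is a structural analysis of $P$. Recall from Definition~\ref{def:tight} that a sequence is tight iff its pattern $\pi$ satisfies $|\pi_i-i|\le 1$; from this I extract two facts about the tight prefix: any descent $u_i>u_j$ with $i<j$ has $j=i+1$, and $u_{n-1}$ is the largest or second largest of $u_1,\dots,u_{n-1}$ (second largest only when $u_{n-2}$ is largest). Put $c=\max(u_{n-2},u_{n-1})$; one then checks that $c$ is in fact the largest of $u_1,\dots,u_{n-1}$, that $u_{n-3}<c$, and --- using that $P$ is \emph{not} tight --- that $u_n<u_{n-2}$. Now split on the sign of $u_{n-2}-u_{n-1}$. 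If $u_{n-2}>u_{n-1}$, then positions $n-2,n-1$ are interchanged in the prefix, which forces $u_{n-3}<u_{n-1}<u_{n-2}$, and the forward subpath $u_{n-3},u_{n-2},u_{n-1},u_n$ of $P$ is a candidate path with $a=u_{n-3}$, $c=u_{n-2}$, $b=u_{n-1}$, $d=u_n$, $m=1$. If $u_{n-2}<u_{n-1}$, then $u_{n-1}$ is the largest entry of the prefix, and the \emph{reversed} subpath $u_n,u_{n-1},u_{n-2},u_{n-3}$ is a candidate path with $a=u_n$, $c=u_{n-1}$, $b=u_{n-2}$, $d=u_{n-3}$, $m=1$. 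In either case $a<b<c$ and $d<c$, and $\{a,c\},\{c,b\},\{b,d\}$ are edges of $F$.

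Finally I feed this candidate path into the QPO hypothesis, which guarantees either $ad\in E(G)$, or else $d<b$ and $cd\in E(G)$. In the first case the four distinct vertices $a,c,b,d$ span a $4$-cycle $C$ of $G$; since $F$ is a forest containing the path $a,c,b,d$, we have $ad\notin F$, and because $c$ is the unique maximum of $\{a,b,c,d\}$ --- so the least vertex of $C$ lies in $\{a,d\}$, with $a<b$ and $d<c$ --- a short comparison shows that $ad$ is the lexicographically smallest edge of $C$; hence $\brk(C)=C\setminus\{ad\}\subseteq F$, contradicting $F\in\cNBC(G)$. In the second case $cd$ together with the edges $\{c,b\},\{b,d\}\in F$ forms a $3$-cycle of $G$, contradicting the hypothesis that $G$ is triangle-free. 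Either way we obtain a contradiction, which completes the proof. The one genuinely delicate point is the structural step of the middle paragraph: locating the obstruction in a four-vertex window at the tail of a shortest bad path, realizing that this window must sometimes be read backwards to match the definition of a candidate path, and checking the inequalities that certify it as one.
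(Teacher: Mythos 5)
Your overall strategy is the same as the paper's---reduce to the reverse containment, take a shortest non-tight path $P$ in an NBC forest $F$, locate a candidate path, and feed it to the QPO hypothesis---but your structural reduction to a \emph{four}-vertex window is not correct, and this is precisely where the paper has to do more work.

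The gap is the claim that ``using that $P$ is not tight --- $u_n<u_{n-2}$.'' This is not implied by the tightness of the prefix plus the non-tightness of $P$. Consider a path whose standardization is $\pi = 1,4,2,5,3$ (so $n=5$). The prefix has standardization $1,3,2,4$, which is a tight involution, while $\pi$ contains the $312$-pattern $4,2,3$ and is therefore not tight; yet $u_5$ has rank~$3$ and $u_3$ has rank~$2$, so $u_n > u_{n-2}$. You are then in your ``Case B'' ($u_{n-2}<u_{n-1}$), but the reversed tail $u_5,u_4,u_3,u_2$ has $a=u_5>u_3=b$, so it is \emph{not} a candidate path, and your argument stalls. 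The underlying issue is that the obstruction to tightness need not be contained in the last four positions: in this example the bad triple is $u_2,u_3,u_5$, skipping position~$4$. The paper's induction copes with exactly this configuration (it is the subcase $\pi_{k-2}=k$ in Case~2 of the paper's proof) by producing a \emph{five}-vertex candidate path $w_{k-3},\dots,w_{k+1}$ with $m=2$; the contradiction then comes from the fact, established in the first paragraph of the paper's proof, that every candidate path in $F$ satisfies $d<b$ and $cd\in E(G)$. Restricting to $m=1$ candidate paths, as you do, is not enough. (Everything else in your write-up --- the $n\ge 4$ reduction, the analysis of Case~A, the final QPO/triangle-free/NBC contradiction from a genuine $4$-vertex candidate path --- is fine; the single missing piece is the treatment of this subcase of Case~B, which seems to require allowing $m\ge 2$.)
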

\begin{proof}
By Proposition~\ref{avoidIsNBCProp}, we only need to prove $\cNBC(G)\sbe\cTF(G)$.  So let $F$ be an NBC set in $G$.  Then $F$ is a forest, since if $F$ contained a cycle then it would contain a broken circuit.  Let $Q$ be a candidate path in $F$ labeled as in~\ree{candidate-path}.  Then $ad\not\in E(G)$ since if the edge were present in $G$, then $Q$ would be a broken circuit in $F$.  It follows that we must have both $d<b$ and $cd\in E(G)$.  But this rules out any candidate path with four vertices since such a path together with $cd$ would contain a $3$-cycle.

Now let $T$ be one of the component trees of $F$.  To finish the proof, we will show that any path $P$ starting at the root of $T$ is tight by induction on the number of vertices in $P$.  This is clear if $|V(P)|\le2$ since the patterns to be avoided all have $3$ elements.  So assume the result for paths with $k$ vertices and consider a path $P=w_1,w_2,\dots,w_{k+1}$ from the root of $T$.  Let $\pi=\pi_1\pi_2\dots\pi_{k+1}$ be the standardization of $P$, that is, the unique permutation of $[k+1]$ with $\pi\sim P$.  Since $P$ starts at the minimum vertex of $T$ we have  $\pi_1=1$.  So it suffices to show that $\pi$ is a tight involution.  Our main tool will be the characterization of tight involutions in terms of fixed points and $2$-cycles
in Definition~\ref{def:tight}.  By induction $\pi^-:=\pi_1\pi_2\dots\pi_k$ is a tight sequence.  So there are only three possibilities for the position of $k+1$ in $\pi$.

Case 1: $\pi_{k+1}=k+1$.  In this case $\pi^-$ is a tight involution on $[k]$.  So the concatenation $\pi=\pi^-,k+1$ just adds a fixed point at $k+1$ and is also tight.

Case 2: $\pi_k=k+1$.  Since $\pi^-$ is tight, there are three possibilities for the position of $k$ in $\pi$.  If $\pi_{k+1}=k$ then we are done by a similar argument as in Case 1, only adding the $2$-cycle $(k,k+1)$.  If $\pi_{k-1}=k$ then  
$\pi_{k+1},k+1,k,\pi_{k-2}$ corresponds to a candidate path in~$T$ with four vertices, a contradiction as shown in the first paragraph of the proof.  Finally, suppose $\pi_{k-2}=k$. 
First, note that $k>3$ otherwise $\pi_1=3\neq 1$.  So $\pi_{k-3}$ exists. 
Also, since $\pi^-$ is tight and both $\pi_{k-2}=k$ and $\pi_k=k+1$, we must have $\pi_{k-1}=k-1$.  Thus the sequence 
$\pi_{k-3},k,k-1,k+1,\pi_{k+1}$ corresponds to a candidate path
$w_{k-3}, w_{k-2}, w_{k-1}, w_k, w_{k+1}$.  
Appealing to the first paragraph again, $w_{k+1}<w_{k-1}$ and 
$w_{k-2} w_{k+1}\in E(G)$.  But then the path $w_{k-2},w_{k-1},w_k,w_{k+1}$ is a broken circuit in $T$, another contradiction.

Case 3: $\pi_{k-1}=k+1$.  Since $\pi^-$ is tight, there are only two possible positions for $k$ in $\pi$.  If $\pi_{k+1}=k$ then, again by the tightness of $\pi^-$, we must have $\pi_k=k-1$.  This results in the contradiction that $\pi_{k-2},k+1,k-1,k$ corresponds to a candidate path with four vertices.  Finally, if $\pi_k=k$ then we have another four-vertex candidate path corresponding to 
$\pi_{k-2},k+1,k,\pi_{k+1}$.  This final contradiction completes the proof.
\end{proof}

\begin{thm}\label{orderingIffNBC}
Suppose that $G$ has vertex set $[n]$ and has no 3-cycles.   Then the following are equivalent:
\begin{enumerate}
\item[(1)]\label{hasQPO} The labeling of vertices is a QPO.
\item[(2)]\label{NBCisTF}   $\cNBC(G) =  \cTF(G)$.
\item[(3)]\label{TFisPGt} $\TF(G,t)  = (-1)^n P(G,-t)$.
\end{enumerate}
\end{thm}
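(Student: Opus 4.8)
The plan is to establish $(1)\Leftrightarrow(2)$ and $(2)\Leftrightarrow(3)$; three of the four resulting implications are short. The implication $(1)\Rightarrow(2)$ is precisely Proposition~\ref{NBCSetAvoidProp}. For $(2)\Leftrightarrow(3)$, rewrite Whitney's formula (Theorem~\ref{Whitney}) as $(-1)^nP(G,-t)=\sum_{m\ge0}\nbc_m(G)\,t^{n-m}$; since by definition $\TF(G,t)=\sum_{m\ge0}\tf_m(G)\,t^{n-m}$, comparing coefficients of $t^{n-m}$ shows that $(3)$ is equivalent to $\tf_m(G)=\nbc_m(G)$ for all $m$. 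By Proposition~\ref{avoidIsNBCProp} we have $\cTF_m(G)\sbe\cNBC_m(G)$ for every $m$, so equality of these finite cardinalities upgrades to $\cTF_m(G)=\cNBC_m(G)$ for all $m$, i.e.\ to $(2)$. Thus only $(2)\Rightarrow(1)$ requires real work, and I would prove its contrapositive: if the vertex labeling is not a QPO, there is an NBC~set that is not a tight spanning forest, so $\cTF(G)\subsetneq\cNBC(G)$.

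Assume the labeling is not a QPO. By Definition~\ref{def:QPO} some candidate path violates~\eqref{QPO-fail}; among all such, choose one,
$$
Q:\quad a,\ c,\ b,\ v_1,\ \dots,\ v_m=d,
$$
with $m$ minimal. First I would check that $Q$, regarded as a labeled tree rooted at its least vertex, is \emph{not} tight. Since $a<b<c$, each interior vertex $v_i$ ($i<m$) exceeds $c$, and $d<c$, the least vertex of $Q$ is $a$ if $a<d$ and $d$ if $d<a$ (and $a\ne d$, vertices being distinct). If $a<d$, the path read from the root contains the subsequence $c,b,d$, order-isomorphic to $321$ when $d<b$ and to $312$ when $b<d$. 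If $d<a$, the path read from the root is the reverse of $Q$ and contains $b,c,a$ (a copy of $231$) when $m=1$ and $v_1,b,c$ (a copy of $312$) when $m\ge2$. In every case $Q$ contains a bad triple, so $Q\notin\cTF(G)$.

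The crux is to show that this minimal $Q$ is an NBC~set. Since $Q$ is a path it is acyclic, so a broken circuit $\brk(C)\sbe E(Q)$ would force $C$ to be a sub-path $w_i,\dots,w_j$ of $Q$ together with one extra edge $e=\{w_i,w_j\}\in E(G)\sm E(Q)$, which is then the smallest edge of $C$; in particular the least vertex of $C$ lies in $\{w_i,w_j\}$. Writing $Q=w_1,\dots,w_{m+3}$ with $w_1=a$, $w_2=c$, $w_3=b$, I would argue by the position of $w_i$. If $w_i=a$, then because $\{a,c\}$ already lies on $Q$ and $c$ precedes every interior vertex, minimality of $e$ forces $e=\{a,d\}$, impossible as $ad\notin E(G)$. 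If $w_i=c$, then the least vertex of $C$ cannot be $c$ (since $b\in C$ and $b<c$), hence it is $w_j$; this forces $w_j=d$ and $e=\{c,d\}$ with $d$ below every vertex of $C$, so $d<b$ and $cd\in E(G)$ --- contradicting the failure of~\eqref{QPO-fail}. In the remaining case $w_i,w_j\in\{b,v_1,\dots,v_m\}$; replacing the segment $w_i,\dots,w_j$ of $Q$ by the single edge $e$ yields a path of the same form $a,c,b,\dots,d$ with the same quadruple $(a,b,c,d)$, i.e.\ a candidate path that still violates~\eqref{QPO-fail} but is strictly shorter than $Q$, contradicting minimality. (Throughout, the hypothesis that $G$ has no $3$-cycles rules out degenerate length-$3$ cycles such as $a,c,b$.) Hence $Q\in\cNBC(G)\sm\cTF(G)$, completing the contrapositive and the proof.

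I expect the last paragraph to be the main obstacle: the case analysis on where a hypothetical chord of $Q$ can sit, and in particular the bookkeeping of which vertex of the associated cycle is smallest --- hence which edge is deleted to form the broken circuit. The remaining ingredients, namely the coefficient comparison via Whitney's formula, the location of a bad triple in $Q$, and the shortcut producing a shorter candidate path, are routine once set up carefully.
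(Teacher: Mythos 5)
Your proposal is correct and follows essentially the same route as the paper's proof: reduce $(2)\Leftrightarrow(3)$ to Whitney's formula plus the containment $\cTF_m(G)\subseteq\cNBC_m(G)$ from Proposition~\ref{avoidIsNBCProp}, invoke Proposition~\ref{NBCSetAvoidProp} for $(1)\Rightarrow(2)$, and for $(2)\Rightarrow(1)$ take a candidate path $Q$ violating \eqref{QPO-fail} with $m$ minimal, show it contains a bad triple, and show it is an NBC set by a case analysis on possible chords, using the ``shorter candidate path'' trick for chords among $b,v_1,\dots,v_m$. The only (cosmetic) differences are that you organize the NBC case analysis around the location of the smallest edge and smallest vertex of the hypothetical cycle rather than enumerating chord edges $e=av_i$, $e=cv_i$, $e=cd$ directly as the paper does, and your ``bad triple'' step splits the case $d<a$ into $m=1$ versus $m\ge2$, whereas the paper simply observes that $b,c,a$ is always a $231$-pattern in that case.
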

\begin{proof}
The equivalence of~(2) and~(3) follows from Whitney's formula (Theorem~\ref{Whitney}), and the implication $(1)\Rightarrow(2)$ is just Proposition~\ref{NBCSetAvoidProp}, so it remains to prove the converse.  Accordingly, suppose that the labeling of vertices is not a QPO.  Let $Q$ be a candidate path, labeled as in~\eqref{candidate-path}, that fails~\eqref{QPO-fail} and for which $m$ is as small as possible.   We will show that $Q$ contains a bad pattern and is an NBC~set which contradicts condition~(2).

Either $a$ or $d$ is the smallest vertex of~$Q$, so the smaller of the two must be the root.  If $a<d$ then $c,b,d$ is either a 321-pattern or a 312-pattern according as $d<b$ or $d>b$.  On the other hand, if $a>d$ then $b,c,a$ is a 231-pattern.

Now we show that $Q$ is an NBC~set---that is, if $e$ is an edge outside $Q$ whose endpoints belong to $Q$, then $e$ is not the smallest edge of the resulting cycle.  We consider the possibilities for $e$ individually.

 First, consider possible edges containing vertex $a$.  We know $e=ab$ is not an edge of $G$ since $G$ has no 3-cycles, and $e=ad$ is not an edge of $G$ since $Q$ fails the QPO condition.  This leaves $e=av_i$ for some $i \in [m-1]$, but then $e$ is lexicographically greater than $ac$.

Second, we look at edges containing $c$.  If $e=cv_i$ for some $i\in [m-1]$, then $e$ is lexicographically greater than $bc$.  If $e=cd$ and $d>b$ then $e$ is still lexicographically greater than $bc$.  The only other possibility is when $d<b$.  But since $Q$ does not satisfy the QPO condition, this forces $cd$ not to be an edge of $G$.

Finally, suppose $e=v_iv_j$ for some $i,j$ with $j\geq i+3$ where we let $v_0=b$.  In this case, 
\[a,\ c,\ b,\ v_1,\ \dots,\ v_{i-1},\ v_i,\ v_j,\ v_{j+1},\ \dots,\ v_m=d\]
is a shorter candidate path that fails~\eqref{QPO-fail}, which contradicts the choice of~$Q$.
\end{proof}

By Theorem~\ref{HS1}, the generating function for increasing forests in any graph has only nonpositive integer roots, regardless of the ordering of the vertices.  This is not in general the case for the corresponding generating function for tight forests.

\begin{prop}
Let $G$ be a graph with $n$ vertices and $e$ edges.  Then $G$ has a vertex ordering such that every root of  $\TF(G,t)$ is an integer  if and only if it is a forest.
\end{prop}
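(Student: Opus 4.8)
The plan is to prove both implications by identifying, for a suitable vertex ordering, the polynomial $\TF(G,t)$ with $t^{\,n-e}(t+1)^e$, and then showing that this identity is in fact the \emph{only} way $\TF(G,t)$ can have all integer roots. For the ``if'' direction, suppose $G$ is a forest with $c$ components, so $e=n-c$. I would first choose a vertex labeling that makes $G$ increasing: label the components in consecutive blocks, and within the block of a component pick any vertex as the root and assign labels so that each vertex gets a larger label than its parent (for instance, in order of breadth-first distance from the root). Then the root of each component is its minimum vertex and labels strictly increase along every root-to-leaf path, so $G$ is an increasing forest; since an increasing path avoids $21$, it avoids $231,312,321$ as well, so $G$ is tight. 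Because $G$ is a forest, \emph{every} subset of $E(G)$ is acyclic, hence a spanning forest, and by Lemma~\ref{subTreeAvoidLem} every subforest of the tight forest $G$ is again tight. Therefore $\tf_m(G)=\binom{e}{m}$ for all $m$, and
\[
\TF(G,t)=\sum_{m\ge0}\binom{e}{m}t^{\,n-m}=t^{\,n-e}(t+1)^e,
\]
whose roots are $0$ (with multiplicity $c$) and $-1$ (with multiplicity $e$), all integers.

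For the ``only if'' direction, fix \emph{any} vertex ordering and assume $\TF(G,t)$ has only integer roots. The top three coefficients are forced regardless of the ordering: the empty subgraph gives $\tf_0(G)=1$; each single edge is a tree with at most two edges, hence tight, so $\tf_1(G)=e$; and any two edges of a (simple) graph form an acyclic subgraph which, having at most two edges, is tight, so $\tf_2(G)=\binom{e}{2}$. Since $\TF(G,t)$ is monic of degree $n$ with nonnegative coefficients, we may write $\TF(G,t)=\prod_{i=1}^n(t+a_i)$ with each $a_i$ a nonnegative integer. Then $\sum_i a_i=e$ and $\sum_{i<j}a_ia_j=\binom{e}{2}$, so
\[
\sum_i a_i^2=\Big(\sum_i a_i\Big)^2-2\sum_{i<j}a_ia_j=e^2-(e^2-e)=e=\sum_i a_i .
\]
As $a_i^2\ge a_i$ for every nonnegative integer, equality of these sums forces $a_i\in\{0,1\}$ for all $i$; since the coefficient of $t^{\,n-1}$ is $e$, exactly $e$ of them equal $1$, whence $\TF(G,t)=t^{\,n-e}(t+1)^e$ (in particular $e\le n$). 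Comparing the coefficient of $t^{\,n-e}$ now gives $\tf_e(G)=1$, so $G$ has a spanning forest with $e=|E(G)|$ edges. Such a spanning forest uses every edge of $G$, so $G$ is acyclic, i.e.\ $G$ is a forest.

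I do not expect a serious obstacle: the argument is short once the pieces are assembled. The two points that need a little care are (i) exhibiting an explicit labeling that makes the forest $G$ increasing, together with the observation (via Lemma~\ref{subTreeAvoidLem}) that this forces $\tf_m(G)$ to equal $\binom{e}{m}$ \emph{exactly}, not merely at most; and (ii) the elementary but decisive fact that nonnegative integers satisfying $\sum_i a_i^2=\sum_i a_i$ must all be $0$ or $1$, which is what allows the first three coefficients of $\TF(G,t)$ to pin down the entire polynomial. The case $e>n$ requires no separate treatment, since the computation in (ii) already makes ``all integer roots'' impossible in that case.
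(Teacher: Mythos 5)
Your proof is correct and takes essentially the same approach as the paper. The reverse direction is identical in substance: both extract $\sum a_i=e$ and $\sum_{i<j}a_ia_j=\binom{e}{2}$ from the observation that all forests with at most two edges are tight, then use the identity $\sum a_i^2 = (\sum a_i)^2 - 2\sum_{i<j}a_ia_j$ to force $a_i\in\{0,1\}$, conclude $\TF(G,t)=t^{n-e}(t+1)^e$, and deduce that $G$ must be acyclic. The forward direction differs only cosmetically: you compute $\TF(G,t)=t^{n-e}(t+1)^e$ explicitly (via Lemma~\ref{subTreeAvoidLem} and the fact that an increasing labeling makes $G$ itself tight, so every subset of $E(G)$ is a tight subforest), whereas the paper observes that an increasing labeling of a forest is simultaneously a PEO and a QPO and invokes $\TF(G,t)=\ISF(G,t)$ together with Theorem~\ref{HS1}. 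Your version is a bit more self-contained since it avoids routing through the PEO/QPO machinery, but it is not a different proof strategy.
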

\begin{proof}

\noindent ($\Leftarrow$)  If $G$ is a forest, then any increasing labeling gives both a PEO and a QPO.  Hence $\TF(G,t)=\ISF(G,t)$, which has only integral roots by Theorem~\ref{HS1}.

\noindent ($\Rightarrow$)  Suppose that $\TF(G,t)=(t+a_1)\cdots(t+a_n)$, where the $a_n$ are integers.  Since all coefficients of $\TF(G,t)$ are nonnegative,  all its roots are nonpositive  and  so  all the $a_i$ are nonnegative.  As observed earlier, every forest with two or fewer edges is tight so we have, for $q=|E(G)|$
\begin{align*} 
\TF(G,t) &= t^n+q t^{n-1}+\binom{q}{2}t^{n-2}+\cdots\\
&= t^n + \left(\sum_i a_i\right)t^{n-1} + \left(\sum_{i<j} a_ia_j\right)t^{n-2}+\cdots.
\end{align*}
Note that
\[\binom{q}{2} = \binom{a_1+\cdots+a_n}{2}
 = \frac12\left(\sum_i a_i^2+2\sum_{i<j} a_ia_j-\sum_i a_i\right)
 = \frac12\left(\sum_i (a_i^2-a_i)\right)+\sum_{i<j} a_ia_j.\]
 So for the second equality in the first sequence of equations to hold, we must have $a_i\in\{0,1\}$ for all $i$.  Since $q=\sum a_i$ we obtain $\TF(G,t)=t^{n-q}(t+1)^q$.   But this implies that every subset of $E(G)$ is a (tight) forest, so $G$ is a forest.
\end{proof}

We conclude with two problems for further study suggested by the previous proposition.  For any set of patterns $\Pi$ and any graph~$G$, let $\mathcal{F}^\Pi_m(G)$ be the set of $m$-edge spanning forests of $G$ avoiding all patterns in $\Pi$, and let $F^\Pi(G,t)=\sum_m  |\mathcal{F}^\Pi_m(G)| t^{n-m}$.

\begin{question}
For which sets of patterns $\Pi$ and graphs $G$ does $F^\Pi(G,t)$ have integer roots?  Can there be roots other than 0 and $-1$?
\end{question}

For $\Pi= \{21\}$, the generating function $F^\Pi(G,t)=\ISF(G,t)$ has integer roots by Theorem~\ref{HS1}, so its coefficient sequence is log-concave, hence unimodal.  For $\Pi=\{231,312,321\}$, if the vertex labeling is a QPO, then by Theorem~\ref{orderingIffNBC} we have $F^\Pi(G,t)=\TF(G,t)=(-1)^n P(G,-t)$, and the coefficient sequence is log-concave by a celebrated recent result of Huh~\cite{h:mnphcp}.

\begin{question}
For which sets of patterns $\Pi$ and graphs $G$ is the coefficient sequence of $F^\Pi(G,t)$ unimodal or log-concave?
\end{question}

{\em Acknowledgement.} We thank Art Duval for stimulating conversations.

\bibliographystyle{amsalpha}
\newcommand{\etalchar}[1]{$^{#1}$}
\providecommand{\MR}[1]{}
\providecommand{\bysame}{\leavevmode\hbox to3em{\hrulefill}\thinspace}
\providecommand{\MR}{\relax\ifhmode\unskip\space\fi MR }
\providecommand{\MRhref}[2]{%
  \href{http://www.ams.org/mathscinet-getitem?mr=#1}{#2}
}
\providecommand{\href}[2]{#2}

\end{document}